\newtheorem{theorem}{Theorem}
\newtheorem{lemma}[theorem]{Lemma}
\newtheorem{corollary}[theorem]{Corollary}
\newtheorem{definition}{Definition}
\newtheorem{remark}{Remark}
\newtheorem{example}{Example}
\newtheorem{question}{Question}
\newtheorem{notation}{Notation}
\DeclareMathOperator{\rk}{rank}
\title{Coverings over Tori and Topological Approach to Klein's Resolvent Problem}
\begin{document}

\author{Y. Burda}

\maketitle


\begin{abstract}
This work answers the question what coverings over a topological torus can be induced from a covering over a space of dimension $k$. The answer to this question is then applied in algebro-geometric context to present obstructions to transforming an algebraic equation depending on several parameters to an equation depending on less parameters by means of a rational transformation.
\end{abstract}

\bibliographystyle{acm}

\tableofcontents

\section{Introduction}

The goal of this article is to develop a topological approach to Klein's resolvent problem. This problem asks for the minimal number of independent parameters on which a given algebraic equation depending on several parameters can be made to depend after a rational transformation is applied to it (see section \ref{section:formulations} below for a precise formulation).

The approach is to make precise the statement that a complicated enough monodromy of an algebraic function might prevent it from living on a space of small enough dimension. In \cite{Arn1} Arnold proposed to use for this purpose characteristic classes of algebraic functions with values in cohomology groups of the space on which the function is defined. Another approach, dual to this one in some sense, uses critical submanifolds in the base space instead of characteristic classes. It was first proposed in \cite{Cheb1} (this article contained uncorrectable mistakes) and later developed in \cite{Lin1} to give very strong results.

These approaches work well when the goal is to show that a given algebraic function can't be induced from an algebraic function on a space of low dimension by means of a polynomial mapping. In other words they can be applied to prove that a given algebraic function can't be expressed using a formula involving the operations of addition, subtraction, multiplication and solving one algebraic equation depending on a small number of parameters. These methods fail however when the operation of division is allowed: when one throws away an arbitrary hypersurface from the base space its topology can change in unexpected ways.

To overcome this obstacle Buhler and Reichstein developed in \cite{Buh2} purely algebraic methods to approach Klein's resolvent problem. In \cite{Buh1}, following an approach suggested by Serre, they used algebraic analogues of Stiefel-Whitney classes taking values in Galois cohomology of the base-field of an algebraic extension of fields, making the algebraic approach similar in spirit to the topological approach of Arnold's work \cite{Arn1}.

In this article we present an approach that uses mostly geometric and topological methods.  More precisely we use a certain family of tori in the base-space of an algebraic function with the properties that the restrictions of the algebraic function to all these tori are topologically equivalent and for any hypersurface one can find a torus in this family that lives in the compliment to this hypersurface. It turns out that sometimes, using characteristic classes for coverings, one can show that the restriction of the algebraic function to each of these tori is ``complicated enough'' so that it can't be induced from any algebraic function on a variety of a small dimension.

To make this plan work we solve completely in sections \ref{section:notations}-\ref{section:tori} the problem of determining whether a given covering over a topological torus can be induced from a covering over a topological space of dimension $k$. The answer turns out to be ``for $k$ greater than or equal to the rank of the monodromy group of the covering''.

In the second part of this article (sections \ref{section:formulations}-\ref{section:generic}) we apply these topological results in the context of Klein's resolvent problem. In section \ref{section:algebraic_tori} we completely solve Klein's resolvent problem for algebraic functions unramified over the algebraic torus and use this result to prove some estimates in Klein's resolvent problem for the universal algebraic function. In section \ref{section:local_version} we give a more general construction which can be applied to get estimates in Klein's resolvent problem for any algebraic function and apply them to reprove the estimates for the universal algebraic function. Finally in \ref{section:generic} we apply these methods to show that generically one should expect that an algebraic function of degree $\geq 2k$ depending on $k$ parameters doesn't admit any rational transformation that makes it depend on a smaller number of parameters.

The author would like to express his gratitude to his advisor A.G. Khovanskii for warm support for this work and fruitful discussions about Klein's resolvent problem. The author also thanks M. Mazin for explaining how the notions of a Parshin point and its neighbourhood can be applied in geometric context.

\section{Notations}
\label{section:notations}

\subsection{Spaces and their dimensions}

We will be dealing in this work with coverings over topological spaces. When we say a \textit{space} what we will mean a topological space which admits a universal covering and is homotopically equivalent to a CW-complex. We will say that a space is \textit{of dimension at most $k$} if it is homotopically equivalent to a CW-complex with cells of dimension at most $k$. A \textit{mapping} between two spaces will refer to a continuous mapping.

Any constructible algebraic set over the complex numbers is a space in the above meaning. Moreover, an affine variety of complex dimension $k$ is a space of dimension at most $k$. 

\subsection{Coverings and their monodromy}
\label{section:monodromy}

The notation $\xi_X$ will denote a covering $p_X:(\tilde{X},\tilde{x}_0)\to(X,x_0)$ between pointed spaces $(\tilde{X},\tilde{x}_0)$ and $(X,x_0)$. The space $X$ will be assumed to be connected, but $\tilde{X}$ --- not necessarily so.

For a given covering $\xi_X$ we can consider the \textit{monodromy representation} of the fundamental group $\pi_1(X,x_0)$ on the fiber of $p_X$ over the basepoint $x_0$. Namely the monodromy representation is a group homomorphism $M_X:\pi_1(X,x_0)\rightarrow S(p_X^{-1}(x_0))$ which maps the class of a loop $\gamma$ in the fundamental group to the permutation that sends the point $\tilde{x}\in p_X^{-1}(x_0)$ to the other endpoint of the unique lift $\tilde{\gamma}$ of the loop $\gamma$ to a path in $\tilde{X}$ starting at $\tilde{x}$. The image of the monodromy representation is called the \textit{monodromy group}.

In fact specifying a covering over $(X,x_0)$ is equivalent to specifying the fiber over the basepoint $x_0$, a point in this fiber, and the monodromy action of $\pi_1(X,x_0)$ on the fiber. Indeed, given a set $L$, an action $M:\pi_1(X,x_0)\rightarrow S(L)$ of the fundamental group $\pi_1(X,x_0)$ on $L$ and a point $l_0\in L$, we can construct a covering over $(X,x_0)$, whose fiber over $x_0$ can be identified with $L$ and with this identification the basepoint of the total space of the covering gets identified with $l_0$ and the monodromy representation of the fundamental group gets identified with $M$.

To do so let $p_X^u:(U,u_0)\rightarrow (X,x_0)$ denote the universal covering over $(X,x_0)$. The fundamental group $\pi_1(X,x_0)$ acts on the total space $U$ via deck transformations of the universal covering. We define $\tilde{X}$ as the quotient of the product space of $U\times L$ by the equivalence relation $(\alpha\cdot u,l) \sim (u,M(\alpha)\cdot l)$, where $u\in U$, $l\in L$ and $\alpha\in \pi_1(X,x_0)$. Let $[u,l]$ denote the class of equivalence of point $(u,l)\in U\times L$ under this equivalence. We choose the point $\tilde{x}_0=[u_0,l_0]$ as the basepoint of $\tilde{X}$. The covering map $p_X:(\tilde{X},\tilde{x}_0)\to (X,x_0)$ is defined by the formula $p_X([u,l])=p_X^u(u)$ (it is easy to see that this map is well-defined and is a covering map). Since the action of $\pi_1(X,x_0)$ on the fiber of the universal covering over $x_0$ is free and transitive, each point $[u,l]\in p_X^{-1}(x_0)$ is represented by a unique pair of the form $(u_0,l')$. We will identify this point with the element $l'\in L$. With this identification the monodromy action of the constructed covering on the fiber $p_X^{-1}(x_0)$ is identified with $M$ and the basepoint $[u_0,l_0]$ gets identified with $l_0$.

\section{\texorpdfstring{$G$}{G}-labelled coverings}

Let $G$ be a group. A \textit{$G$-labelled covering} $\xi_X$ is a covering map $p_X:(\tilde{X},\tilde{x}_0)\to (X,x_0)$ between pointed topological spaces together with an identification of the monodromy group with a subgroup of $G$. Explicitly, the labelling is an injective group homomorphism $L_X:M_X(\pi_1(X,x_0))\to G$ from the monodromy group to $G$. We will refer to the composition of the monodromy representation $M_X$ and the labelling map $L_X$ simply as ``the monodromy map'' $\mathcal{M}_X=L_X\circ M_X:\pi_1(X,x_0)\to G$. The image of $\mathcal{M}_X$ in $G$ will be called ``the monodromy group of the $G$-labelled covering'', or, if no confusion could arise, ``the monodromy group''.

For every map $f:X\to Y$ and any $G$-labelled covering $\xi_Y$ we can define the induced $G$-labelled covering $f^*\xi_Y$ over $X$ in the obvious way. For a map $f:X\to Y$ which induces a surjective homomorphism on the fundamental groups, the monodromy group of $f^*\xi_Y$ is equal to that of $\xi_Y$.

The main objects of inquiry in this part of the work will be coverings, however the additional structure of labelling has to be introduced for the definition of characteristic classes below: the value of a characteristic class on a covering with monodromy group isomorphic to $G$ may depend on the labelling.

Note also that any covering can be considered as an $S(n)$-labelled covering, once the fiber over the base-point is identified with the set of labels $1,\ldots,n$.

\section{Characteristic classes for coverings}\label{sectionchar}

In the definition below we will use the category of coverings whose objects are coverings over connected topological spaces and morphisms between two coverings $\xi_X$ and $\xi_Y$ are pairs of maps $(f,g)$ making the diagram
$$\xymatrix{
(\tilde{X},\tilde{x}_0) \ar[d]^{p_X} \ar[r]^{g} & (\tilde{Y},\tilde{y}_0) \ar[d]^{p_Y} \\
(X,x_0)  \ar[r]^{f} & (Y,y_0) 
}
$$
commutative and such that $g:p_X^{-1}(x)\to p_Y^{-1}(f(x))$ is a bijection for every  $x\in X$.

The category of $G$-labelled coverings has $G$-labelled coverings as objects and a morphism of $G$-labelled coverings $\xi_X$ and $\xi_Y$ is a morphism of the underlying coverings with the additional requirement about the labellings: $\mathcal{M}_X=\mathcal{M}_Y\circ f_*$.

\begin{definition}
Let $\cal{C}$ be any subcategory of the category of coverings or of the category of $G$-labelled coverings for some group $G$. A \textbf{characteristic class} for category $\mathcal{C}$ of degree $k$ with coefficients in an abelian group $A$ is a mapping $w$ which assigns to any covering $\xi_X$ from $\cal{C}$ a cohomology class $w(\xi_X)\in H^k(X,A)$ such that if $(f,g)$ is a morphism from $\xi_X$ to $\xi_Y$ then  $w(\xi_Y)=f^*w(\xi_X)$.
\end{definition}

Here are some examples of characteristic classes.

\begin{example}
\label{example:classifying}
Let $G$ be a discrete group and $A$ --- an abelian group. Let $(BG,b_0)$ be the classifying space for the group $G$. For $k>0$ let $w\in H^k(BG,A)$ be any class in the cohomology of group $G$. Let $\xi_X$ be a $G$-labelled covering. The map $\mathcal{M}_X:\pi_1(X,x_0)\to G$ gives rise to a unique homotopy class of maps $cl_X:(X,x_0)\to(BG,b_0)$ so that $\mathcal{M}_X=cl_{X*}:\pi_1(X,x_0)\to \pi_1(BG,b_0)=G$. Let $w(\xi_X)\in H^k(X,A)$ be the pullback of the class $w$ through $cl_X$. It is easy to check that the class $w$ thus constructed is characteristic.
\end{example}

\begin{example}
\label{example:abelian}
Let $G$ be a finitely generated abelian group. Let $n$ be any natural number and suppose that $G/nG$ is isomorphic to $(\mathbf{Z}_n)^k$ for some $k$ (this will be automatically true if $n$ is prime for example). Fix such an isomorphism of $G/nG$ with $(\mathbf{Z}_n)^k$. For a $G$-labelled covering $\xi_X$ let $c\in H^1(X,G)$ be the cohomology class obtained from $\mathcal{M}_X$ by identification $Hom(\pi_1(X,x_0),G)\cong Hom(H_1(X),G)\cong H^1(X,G)$ (the first equality follows because $G$ is abelian). One can also think of this class as the Cech cohomology class that defines the principal $G$-bundle associated with the covering $\xi_X$.

Let now $p_j:G\to \mathbf{Z}_n$ be the composite of the quotient map $G\to G/nG$ and the projection from $(\mathbf{Z}_n)^k$ to the $j$-th factor in the product. Let $w(\xi_X)$ be the cup product of the images of $c$ under the maps $p_{j*}:H^1(X,G)\to H^1(X,\mathbf{Z}_{n})$, i.e. $w(\xi_X)=p_{1*}(c)\cup \ldots \cup p_{k*}(c) \in H^k(X,\mathbf{Z}_{n})$. Once again it is easy to see that the class $w$ is characteristic.
\end{example}

\begin{example}
\label{example:stiefel}
Every $n$-sheeted covering $\xi_X$ gives rise to an $n$-dimensional real vector bundle by the change of fiber over point $x\in X$ from $p_X^{-1}(x)$ to the real vector space spanned by the points of $p_X^{-1}(x)$. Stiefel-Whitney classes of this bundle give rise to characteristic classes for the category of $n$-sheeted coverings. 
\end{example}

Example \ref{example:classifying} is a very general way of constructing characteristic classes for $G$-labelled coverings (indeed, all characteristic classes can be produced this way). It involves however computing the cohomology of a group. Example \ref{example:abelian} is an extremely simple construction, but it will prove powerful enough for our purposes: finding a topological obstruction to inducing a given covering with abelian monodromy group from a covering over a space of a small dimension. Example \ref{example:stiefel} has been used in \cite{Arn1}. A variation on example \ref{example:classifying} with group $S(n)$ and with coefficients taken in an $S(n)$-module $\mathbf{Z}$ with action given by the sign representation $S(n)\to Aut(\mathbf{Z})\cong \mathbf{Z}_{2}$ has been used in \cite{Vas1} (note however that our definition of characteristic classes is too restrictive to include this as an example).

\begin{remark}Characteristic classes defined for all $n$-sheeted coverings are rather weak in distinguishing coverings, whose monodromy group consists only of even permutations. For example consider the degree 3 covering $\xi_X$ over the circle $X=S^1$ given by $p:S^1\rightarrow S^1$, $p(z)=z^3$ (we think of the circle as the circle of unit-length complex numbers). Then every characteristic class $w$ (with any coefficients) vanishes on $\xi_X$. Indeed, consider figure eight $Y=S^1\vee S^1$ with the base point $y_0$ being the common point of the two circles. Let $a,b$ denote the two loops corresponding to the two circles in figure eight. Now consider the covering $\xi_Y$ with monodromy representation sending $[a]\in \pi_1(Y,y_0)$ to the permutation $(1 2)\in S(3)$ and $[b]\in \pi_1(Y,y_0)$ to $(2 3)$. Then our covering $\xi_X$ is induced from $\xi_Y$ by mapping $g:X\rightarrow Y$ sending the loop that goes around the circle $X$ to the path $a b a^{-1} b^{-1}$ in $Y$ (indeed, $(1 2 3)=(1 2) (2 3) (1 2)^{-1} (2 3)^{-1}$), and hence $w(\xi_X)=g^*(w(\xi_Y))$. However $g^*:H^1(Y)\rightarrow H^1(X)$ is clearly the zero map, so no characteristic class for 3-sheeted coverings can distinguish $\xi_X$ from the trivial covering.

By taking Cartesian product of $m$ copies of this example, we get a covering over $m$-dimensional torus, which can be induced from a covering over some space (a product of $m$ figure-eights) through a map that induces the zero map on the reduced cohomology ring. Thus all characteristic classes defined for $3^m$-sheeted coverings must vanish on it. Later we will show that this covering can't be induced from any covering over a space of dimension smaller than $m$, thus showing it is very far from being trivial.
\end{remark}

\section{Inducing coverings from spaces of low dimension}

\subsection{From coverings to coverings with the same monodromy group}

\begin{lemma}
\label{lemma:dim}
Let $h:(X,x_0)\rightarrow (Y,y_0)$ be a mapping between pointed spaces. Then there exists a pointed space $(\tilde{Y},\tilde{y}_0)$ and maps $g:(\tilde{Y},\tilde{y}_0)\rightarrow (Y,y_0)$, $f:(X,x_0)\rightarrow(\tilde{Y},\tilde{y}_0)$ so that $h=g\circ f$, the mapping $g$ is a covering map and the homomorphism $f_*:\pi_1(X,x_0)\rightarrow \pi_1(\tilde{Y},\tilde{y_0})$ induced by $f$ on the fundamental groups is surjective.
\end{lemma}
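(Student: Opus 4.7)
The plan is a direct application of the classification of coverings and the lifting criterion. Let $H = h_{*}(\pi_1(X,x_0)) \subseteq \pi_1(Y,y_0)$. Since $Y$ admits a universal cover by assumption, standard covering space theory (applied to a CW-model of $Y$) gives a connected pointed covering $g:(\tilde{Y},\tilde{y}_0)\to (Y,y_0)$ whose image subgroup $g_{*}(\pi_1(\tilde{Y},\tilde{y}_0))$ is exactly $H$. Concretely, $\tilde{Y}$ is obtained from the universal cover $U\to Y$ by quotienting by the action of $H\subseteq \pi_1(Y,y_0)$ through deck transformations, with $\tilde{y}_0$ chosen as the image of the universal basepoint.

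Next I would produce the factorization $f$ via the lifting criterion. Since
\[
h_{*}(\pi_1(X,x_0)) \;=\; H \;=\; g_{*}(\pi_1(\tilde{Y},\tilde{y}_0)),
\]
the criterion provides a unique continuous lift $f:(X,x_0)\to(\tilde{Y},\tilde{y}_0)$ with $g\circ f = h$. To apply the criterion inside the paper's class of spaces (which are only assumed homotopy equivalent to CW-complexes, not themselves locally path-connected), I would first replace $X$ by a CW-complex in its homotopy class, carry out the construction there, and then transport back via the homotopy equivalence; the resulting data satisfies the conclusion of the lemma since all its assertions are invariant under homotopy.

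Finally, surjectivity of $f_{*}$ drops out for free. Because $g$ is a covering, $g_{*}$ is injective. The identity $g_{*}\circ f_{*} = h_{*}$ then gives
\[
g_{*}\bigl(f_{*}(\pi_1(X,x_0))\bigr) \;=\; h_{*}(\pi_1(X,x_0)) \;=\; H \;=\; g_{*}(\pi_1(\tilde{Y},\tilde{y}_0)),
\]
so injectivity of $g_{*}$ forces $f_{*}(\pi_1(X,x_0)) = \pi_1(\tilde{Y},\tilde{y}_0)$.

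The only genuine subtlety, and thus the place I expect to have to be careful, is verifying the hypotheses of the lifting criterion within the paper's looser notion of ``space.'' Once one is willing to pass to a CW-representative of $X$ (which the definitions in Section~\ref{section:notations} explicitly permit), the rest of the argument is essentially formal manipulation of the standard dictionary between pointed connected coverings of $Y$ and subgroups of $\pi_1(Y,y_0)$.
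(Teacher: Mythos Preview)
Your argument is correct and matches the paper's proof essentially line for line: take the covering of $Y$ corresponding to the subgroup $h_*(\pi_1(X,x_0))$, lift $h$ through it, and deduce surjectivity of $f_*$ from injectivity of $g_*$ together with $g_*\circ f_* = h_*$. The only addition on your side is the explicit remark about passing to a CW-model to justify the lifting criterion, which the paper leaves implicit.
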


The proof of the lemma is an explicit construction: we define the covering $g:(\tilde{Y},\tilde{y}_0)\rightarrow (Y,y_0)$ as the covering over $(Y,y_0)$ that corresponds under the Galois correspondence for coverings to the subgroup $h_*(\pi_1(X,x_0))$ in $\pi_1(Y,y_0)$ (that is we ``unwind'' all the loops in $Y$ that are not the images of loops from $X$). Then we define $f:(X,x_0)\rightarrow(\tilde{Y},\tilde{y_0})$ as the unique lifting of the mapping $h:(X,x_0)\rightarrow(Y,y_0)$ through the map $g$ (which exists because $h_*(\pi_1(X,x_0))=g_*(\pi_1(\tilde{Y},\tilde{y_0}))$). It remains to check that the mapping $f$ thus defined induces a surjective homomorphism on the fundamental groups, i.e. that $f_*(\pi_1(X,x_0))=\pi_1(\tilde{Y},\tilde{y}_0)$. Because $g_*$ is injective, this is equivalent to verifying that $g_*(f_*(\pi_1(X,x_0)))=g_*(\pi_1(\tilde{Y},\tilde{y}_0))$. This is true, since both sides are equal to $h_*(\pi_1(X,x_0))$: the left side, because $g_*\circ f_*=h_*$ and the right side --- by construction.

\begin{lemma}
\label{lemma:labelling}
Let $f:(X,x_0)\to (Y,y_0)$ be a map inducing a surjective homomorphism of fundamental groups. Let covering $\xi_X$ over $X$ be induced from a covering $\xi_Y$ over $Y$ by means of the map $f$. Suppose that the covering $\xi_X$ can be $G$-labelled. Then the covering $\xi_Y$ can also be $G$-labelled in a way that the $G$-labelled covering $\xi_X$ is induced from the $G$-labelled covering $\xi_Y$ (as a $G$-labelled covering).
\end{lemma}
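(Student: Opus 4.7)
The plan is to observe that when $f_{*}$ is surjective, the monodromy groups of $\xi_X$ and $\xi_Y$ coincide as subgroups of the symmetric group on the common fiber, so the labelling of $\xi_X$ can simply be transported to $\xi_Y$.

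First I would unwind what it means for $\xi_X$ to be induced from $\xi_Y$ via $f$. By construction of the induced covering $f^{*}\xi_Y$, the fiber $p_X^{-1}(x_0)$ is canonically identified with $p_Y^{-1}(y_0)$, and under this identification the monodromy representations satisfy $M_X = M_Y \circ f_{*}$. In particular, $M_X$ and $M_Y$ land in the same symmetric group $S(p_Y^{-1}(y_0))$.

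Next I would use the surjectivity hypothesis. Since $f_{*}:\pi_1(X,x_0)\to\pi_1(Y,y_0)$ is surjective, the identity $M_X = M_Y \circ f_{*}$ gives
\[
M_X(\pi_1(X,x_0)) \;=\; M_Y(f_{*}(\pi_1(X,x_0))) \;=\; M_Y(\pi_1(Y,y_0)).
\]
Thus the monodromy groups of $\xi_X$ and $\xi_Y$ are literally the same subgroup of $S(p_Y^{-1}(y_0))$.

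The construction of the labelling is now immediate: define $L_Y := L_X$, viewed as an injective homomorphism from $M_Y(\pi_1(Y,y_0)) = M_X(\pi_1(X,x_0))$ into $G$. To verify that $\xi_X$ is induced from $\xi_Y$ as a $G$-labelled covering I just need to check the compatibility condition $\mathcal{M}_X = \mathcal{M}_Y \circ f_{*}$. But
\[
\mathcal{M}_Y \circ f_{*} \;=\; L_Y \circ M_Y \circ f_{*} \;=\; L_X \circ M_X \;=\; \mathcal{M}_X,
\]
as required. There is no real obstacle here: the only substantive input is that surjectivity of $f_{*}$ forces equality of the images of $M_X$ and $M_Y$, after which the labelling on $\xi_X$ is at the same time a labelling on $\xi_Y$.
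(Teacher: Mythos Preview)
Your proof is correct and follows essentially the same approach as the paper: the paper defines $L_Y$ by lifting a permutation through $M_Y$ and $f_*$ and then applying $L_X\circ M_X$, checking well-definedness via the observation (made in its final parenthetical) that this just returns $L_X$ of the original permutation --- which is exactly your direct definition $L_Y:=L_X$ after noting the monodromy groups coincide. Your presentation is slightly more streamlined, but the content is the same.
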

\begin{proof}
Let $M_X$ and $M_Y$ be the monodromy representations of the coverings $\xi_X$ and $\xi_Y$ and let $L_X$ be the labelling $L_X:M_X(\pi_1(X,x_0))\to G$. We define the labelling $L_Y:M_Y(\pi_1(Y,y_0))\to G$ as follows: a permutation in $M_Y(\pi_1(Y,y_0))$ is realized as the monodromy along some element $\alpha\in \pi_1(Y,y_0)$. Since $f_*:\pi_1(X,x_0)\to (Y,y_0)$ is surjective by assumption, $\alpha = f_*(\beta)$ for some $\beta\in \pi_1(X,x_0)$. We define the image of the permutation we started with under $L_Y$ to be $L_X(M_X(\beta))$. This definition doesn't depend on the choice of $\alpha$ or its preimage $\beta$ since the covering $\xi_X$ is induced from the covering $\xi_Y$ by means of $f$ (in fact the monodromy along $\beta$ doesn't depend on the choice of $\beta$: it is the same as the permutation we started with after identifying the fiber of $p_Y$ over $y_0$ with the fiber of $p_X$ over $x_0$).
\end{proof}

We will be mainly interested in the following corollary from these lemmas:
\begin{corollary}
\label{corollary:induce}
Let $\xi_X$ be a $G$-labelled covering. Suppose it can be induced (as a covering, not necessarily as a $G$-labelled covering) from a covering over a space $Y$ of dimension $\leq m$. Then it can also be induced from a $G$-labelled covering over a space $\tilde{Y}$ of dimension $\leq m$ by means of a map $f:(X,x_0)\rightarrow (\tilde{Y},\tilde{y}_0)$ with the property that $f_*:\pi_1(X,x_0)\rightarrow \pi_1(\tilde{Y},\tilde{y}_0)$ is surjective. 
\end{corollary}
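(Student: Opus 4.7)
The plan is to chain Lemmas \ref{lemma:dim} and \ref{lemma:labelling} together, with the inducing map from the hypothesis playing the role of $h$. By assumption, there is a map $h:(X,x_0)\to (Y,y_0)$ and a covering $\xi_Y$ over $Y$ with $\dim Y\le m$ such that $\xi_X\cong h^*\xi_Y$ as coverings. First I would apply Lemma \ref{lemma:dim} to $h$ to factor it as $h=g\circ f$, where $g:(\tilde{Y},\tilde{y}_0)\to (Y,y_0)$ is a covering map and $f:(X,x_0)\to (\tilde{Y},\tilde{y}_0)$ induces a surjection on fundamental groups.

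Next I would set $\tilde{\xi}:=g^*\xi_Y$, an (unlabelled) covering over $\tilde{Y}$. By functoriality of pullback, $f^*\tilde{\xi}=f^*g^*\xi_Y=h^*\xi_Y\cong \xi_X$, so $\xi_X$ is induced from $\tilde{\xi}$ via the map $f$, which is a $\pi_1$-surjection. Since $\xi_X$ already carries a $G$-labelling, Lemma \ref{lemma:labelling} applies and produces a $G$-labelling on $\tilde{\xi}$ compatible with $f$, i.e.\ making $\xi_X\cong f^*\tilde{\xi}$ as $G$-labelled coverings. This gives the required inducing of $\xi_X$ as a $G$-labelled covering through a $\pi_1$-surjective map $f$.

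The only point not immediately supplied by the two lemmas is the dimension estimate $\dim\tilde{Y}\le m$. This is where I expect the mild care to be needed. The space $Y$ is homotopy equivalent to a CW-complex $Y'$ of dimension $\le m$; pulling back the covering $g$ along a homotopy equivalence $Y'\simeq Y$ produces a covering $\tilde{Y}'\to Y'$ corresponding to the same subgroup of the fundamental group. Since coverings of CW-complexes inherit a CW-structure by lifting cells of the base one at a time, $\tilde{Y}'$ is a CW-complex of dimension $\le m$, and the homotopy equivalence $Y'\simeq Y$ lifts (using the universal property of coverings together with the homotopy lifting property) to a homotopy equivalence $\tilde{Y}'\simeq \tilde{Y}$. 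Hence $\tilde{Y}$ is homotopy equivalent to a CW-complex of dimension $\le m$, as required.

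Putting the three steps together yields exactly the statement: $\tilde{Y}$ has the right dimension, $f:X\to\tilde{Y}$ is a $\pi_1$-surjection by Lemma \ref{lemma:dim}, and the $G$-labelled covering $\tilde{\xi}$ over $\tilde{Y}$ produced by Lemma \ref{lemma:labelling} pulls back to $\xi_X$. The main obstacle, as noted, is the bookkeeping around the dimension bound for the covering space $\tilde{Y}$; everything else is a direct application of the two preceding lemmas.
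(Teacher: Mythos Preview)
Your proof is correct and follows essentially the same route as the paper: apply Lemma~\ref{lemma:dim} to factor the inducing map, pull back $\xi_Y$ along $g$ to get a covering on $\tilde{Y}$, then invoke Lemma~\ref{lemma:labelling} to transfer the $G$-labelling. The only difference is that you spell out the justification for $\dim\tilde{Y}\le m$ via lifting CW structures along a covering, whereas the paper simply asserts this in one clause (``because it covers the space $Y$''); your expanded argument is correct and is exactly what underlies the paper's terse remark.
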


\begin{proof}
Let the covering $\xi_X$ be induced from the covering $\xi_Y$ by means of the map $h:(X,x_0)\rightarrow(Y,y_0)$. From lemma \ref{lemma:dim} above one can construct pointed space $(\tilde{Y},\tilde{y_0})$ and maps $g:(\tilde{Y},\tilde{y_0})\rightarrow (Y,y_0)$ and $f:(X,x_0)\rightarrow (\tilde{Y},\tilde{y_0})$ so that $g$ is a covering map, $f$ induces surjective homomorphism on the fundamental groups and $h=g\circ f$. The space $\tilde{Y}$ is of dimension $\leq m$, because it covers the space $Y$. The covering $\xi_X$ is induced from the covering $g^*\xi_Y$ on $\tilde{Y}$ by means of $f$. According to lemma \ref{lemma:labelling} the covering $g^*\xi_Y$ can be $G$-labelled so that the $G$-labelled covering $\xi_X$ is induced from it by means of $f$ as a $G$-labelled covering.
\end{proof}

\subsection{Equivalent coverings}

It turns out that some essential properties of a covering depend only on the abstract isomorphism class of its monodromy representation, rather than on the isomorphism class of the covering itself. The purpose of this section is to make one case of this observation precise.

\begin{definition}
Let $\xi_{X,1}$ and $\xi_{X,2}$ be two coverings over the same space $X$ with monodromy representations $M_{X,1}:\pi_1(X,x_0)\rightarrow S(p_{X,1}^{-1}(x_0))$ and $M_{X,2}:\pi_1(X,x_0)\rightarrow S(p_{X,2}^{-1}(x_0))$ respectively.  The coverings $\xi_{X,1}$ and $\xi_{X,2}$ are called \textbf{equivalent} if there exists an isomorphism $$g:M_{X,1}(\pi_1(X,x_0))\rightarrow M_{X,2}(\pi_1(X,x_0))$$ making the following diagram commutative: 

$$\xymatrixcolsep{0pc}\xymatrix{ & \pi_1(X,x_0) \ar[dl]_{M_{X,1}} \ar[dr]^{M_{X,2}} &  \\
      M_{X,1}(\pi_1(X,x_0))   \ar[rr]_g     &    & M_{X,2}(\pi_1(X,x_0))
}
$$

\end{definition}

One can think of equivalent coverings as coverings that can be obtained from each other by means of change of the fiber.

This definition is important for us because of the following lemma.

\begin{lemma}
\label{lemma:equiv}
Suppose that the covering $\xi_{X,1}$ is induced form the covering $\xi_{Y,1}$ by means of map $f:(X,x_0)\rightarrow(Y,y_0)$ that induces surjective homomorphism on fundamental groups. Let $\xi_{X,2}$ be a covering on $X$ which is equivalent to the covering $\xi_{X,1}$. Then there exists a covering $\xi_{Y,2}$ over $Y$ so that $\xi_{X,2}=f^*(\xi_{Y,2})$, that is the covering $\xi_{X,2}$ is also induced from a covering over the same space $Y$. 
\end{lemma}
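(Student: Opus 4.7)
\bigskip

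The plan is to construct $\xi_{Y,2}$ directly from its monodromy data, using the correspondence between coverings and monodromy actions recalled in section \ref{section:monodromy}. The desired covering will have the same fiber as $\xi_{X,2}$ and a monodromy action obtained by transporting that of $\xi_{Y,1}$ across the equivalence isomorphism $g$.

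First I would set up fiber identifications. Since $\xi_{X,1}=f^*\xi_{Y,1}$, the fiber $p_{X,1}^{-1}(x_0)$ is canonically identified with $p_{Y,1}^{-1}(y_0)$, and under this identification $M_{X,1}=M_{Y,1}\circ f_*$. Because $f_*$ is surjective, this forces $M_{Y,1}(\pi_1(Y,y_0))=M_{X,1}(\pi_1(X,x_0))$ as subgroups of $S(p_{Y,1}^{-1}(y_0))=S(p_{X,1}^{-1}(x_0))$, so the equivalence isomorphism $g$ is defined on the image of $M_{Y,1}$.

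Next I would define a monodromy action on the second fiber. Let $L=p_{X,2}^{-1}(x_0)$ with distinguished point $l_0=\tilde{x}_{0,2}$, and set
$$
M_{Y,2}:\pi_1(Y,y_0)\to S(L),\qquad M_{Y,2}(\alpha)=g\bigl(M_{Y,1}(\alpha)\bigr).
$$
This is a well-defined group homomorphism as a composition of homomorphisms. By the construction reviewed in section \ref{section:monodromy}, the data $(L,l_0,M_{Y,2})$ determines a covering $\xi_{Y,2}$ over $(Y,y_0)$ whose fiber over $y_0$ is identified with $L$, whose distinguished point is $l_0$, and whose monodromy representation is $M_{Y,2}$.

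Finally I would check that $f^*\xi_{Y,2}=\xi_{X,2}$. The pullback $f^*\xi_{Y,2}$ has fiber $L$ over $x_0$, distinguished point $l_0$, and monodromy representation
$$
M_{Y,2}\circ f_*=g\circ M_{Y,1}\circ f_*=g\circ M_{X,1}=M_{X,2},
$$
where the last equality is the defining property of the equivalence $g$. Since a covering is determined up to isomorphism by its fiber, basepoint, and monodromy action, this identifies $f^*\xi_{Y,2}$ with $\xi_{X,2}$. The only genuinely delicate point, and the one I would write out carefully, is the fiber identification that makes sense of applying $g$ to elements in the image of $M_{Y,1}$; once that is in place the rest is bookkeeping in the dictionary between coverings and monodromy representations.
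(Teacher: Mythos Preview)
Your proof is correct and follows essentially the same approach as the paper: both construct $\xi_{Y,2}$ via its monodromy representation $M_{Y,2}=g\circ M_{Y,1}$ (the paper reaches this formula by first defining $M_{Y,2}(\alpha)=M_{X,2}(\beta)$ for a preimage $\beta$ and then showing well-definedness, while you define it directly as the composition), and both invoke the dictionary between coverings and monodromy actions from section~\ref{section:monodromy}. Your version is slightly more streamlined in that the well-definedness issue is replaced by the one-line observation that surjectivity of $f_*$ forces $M_{Y,1}(\pi_1(Y,y_0))=M_{X,1}(\pi_1(X,x_0))$, so that $g$ is defined on the image of $M_{Y,1}$.
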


\begin{proof}
Since the covering $\xi_{X,1}$ is induced from the covering $\xi_{Y,1}$, we can identify the corresponding fibers $p_{X,1}^{-1}(x_0)$ and $p_{Y,1}^{-1}(y_0)$. Let $I:S(p_{X,1}^{-1}(y_0))\rightarrow S(p_{Y,1}^{-1}(x_0))$ be the corresponding identification of the permutation groups. Then $I\circ M_{X,1}=M_{Y,1}\circ f_* : \pi_1(X,x_0)\rightarrow S(p_{Y,1}^{-1}(y_0))$ (because $\xi_{X,1}=f^*(\xi_{Y,1})$). Let $g:M_{X,1}(\pi_1(X,x_0))\rightarrow M_{X,2}(\pi_1(X,x_0))$ denote the isomorphism showing that the coverings $\xi_{X,1}$ and $\xi_{X,2}$ are equivalent. We define action $M_{Y,2}:\pi_1(Y,y_0)\rightarrow S(p_{X,2}^{-1}(x_0))$ as follows: let $\alpha\in \pi_1(Y,y_0)$ be any element. Since $f_*:\pi_1(X,x_0)\rightarrow \pi_1(Y,y_0)$ is surjective, we can choose $\beta\in \pi_1(X,x_0)$ so that $f_*\beta=\alpha$. Define $M_{Y,2}(\alpha)$ as $M_{X,2}(\beta)$. This definition is independent of the choice of the preimage $\beta$ of $\alpha$, because $M_{X,2}(\beta)=g(M_{X,1}(\beta))=g(I^{-1}(M_{Y,1}(f_*\beta)))=g(I^{-1}(M_{Y,1}(\alpha)))$, and the right hand side is independent of the choice.

By construction of section \ref{section:monodromy}, the action $M_{Y,2}$ defines a covering $\xi_{Y,2}$ for which $M_{Y,2}$ is the monodromy action, and since $M_{Y,2}\circ f_* = M_{X,2}$ by definition, the covering $\xi_{X,2}$ is induced from it by means of the map $f$.
\end{proof}

\begin{remark}
This lemma shows in particular that if $\xi_X$ is a covering with connected total space $\tilde{X}$ then it is equivalent to its associated Galois covering (i.e. the minimal Galois covering that dominates $\xi_X$).
\end{remark}

\subsection{Domination}

We will later need the notion of one covering being more ``complicated'' than another covering:

\begin{definition}
We say that a covering $\xi_{X}^1$ ($p_1:\tilde{X}^1\to X$) \textbf{dominates} the covering $\xi_{X}^2$ ($p_2:\tilde{X}^2\to X$) if there exists a covering map $p:\tilde{X}^1\to \tilde{X}^2$ making the diagram below commutative

$$\xymatrix{ \tilde{X}^1 \ar[dr]_{p_1} \ar[r]^p&  \tilde{X}^2 \ar[d]^{p_2} \\
              &  X 
}
$$
\end{definition}

\begin{lemma}
\label{lemma:domination}
Suppose that the covering $\xi_X$ on $X$ can be induced from a covering $\xi_Y$ on $Y$ by means of a map $f:X\to Y$ inducing a surjective homomorphism on fundamental groups. Suppose also that the covering $\xi_X$ dominates a covering $W\to X$:

$$\xymatrix{ \tilde{X} \ar[dr]_{p_X} \ar[r] & W \ar[d]   \\ 
              & X  
}
$$
(the maps $\tilde{X}\to W$ and $W\to X$ in the diagram above are covering maps).

Then the covering $W\to X$ can be induced by means of the map $f$ from a covering on $Y$.
\end{lemma}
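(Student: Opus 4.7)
The plan is to reduce the statement to a monodromy-level computation, in the same spirit as the proofs of Lemma \ref{lemma:labelling} and Lemma \ref{lemma:equiv}. The key observation is that the domination $\tilde{X} \to W \to X$ encodes, at the level of fibers over $x_0$, a surjection $\phi : p_X^{-1}(x_0) \to p_W^{-1}(x_0)$ which is equivariant for the monodromy actions of $\pi_1(X,x_0)$: for every loop $\gamma$ and every $\tilde{x}\in p_X^{-1}(x_0)$ one has $\phi(M_X(\gamma)\cdot \tilde{x})=M_W(\gamma)\cdot \phi(\tilde{x})$. This is immediate from lifting $\gamma$ first to $\tilde{X}$ and then projecting, and from the fact that the map $\tilde{X}\to W$ is itself a covering, hence surjective on fibers.

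Next I would define an action $M_{Y,W}:\pi_1(Y,y_0)\to S(p_W^{-1}(x_0))$ of $\pi_1(Y,y_0)$ on the fiber $p_W^{-1}(x_0)$ by imitating the construction in Lemma \ref{lemma:equiv}: for $\alpha\in \pi_1(Y,y_0)$, use surjectivity of $f_*$ to pick $\beta\in \pi_1(X,x_0)$ with $f_*(\beta)=\alpha$, and set $M_{Y,W}(\alpha):=M_W(\beta)$. The construction of section \ref{section:monodromy} then produces a covering $\xi_{Y,W}$ over $Y$ whose monodromy representation is $M_{Y,W}$, and the identity $M_{Y,W}\circ f_* = M_W$ built into the definition will imply that $W\to X$ is induced from $\xi_{Y,W}$ via $f$.

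The main issue, and the one place where domination (rather than mere equivalence) has to be used, is well-definedness of $M_{Y,W}(\alpha)$. If $f_*(\beta_1)=f_*(\beta_2)$, set $\gamma=\beta_1\beta_2^{-1}$; since $\xi_X=f^*\xi_Y$ we have $M_X(\gamma)=M_Y(f_*(\gamma))=1$, so $\gamma$ acts trivially on $p_X^{-1}(x_0)$. I then need that $\gamma$ also acts trivially on $p_W^{-1}(x_0)$. This is exactly where the equivariant surjection $\phi$ from the first step comes in: given any $w\in p_W^{-1}(x_0)$, choose $\tilde{x}\in \phi^{-1}(w)$ and compute
\[
M_W(\gamma)\cdot w = M_W(\gamma)\cdot \phi(\tilde{x}) = \phi(M_X(\gamma)\cdot \tilde{x}) = \phi(\tilde{x}) = w.
\]
Hence $M_W(\gamma)=\mathrm{id}$, so $M_{Y,W}(\alpha)$ depends only on $\alpha$ and not on the choice of $\beta$. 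Once well-definedness is in place, checking that $M_{Y,W}$ is a homomorphism is immediate from the fact that $f_*$ is a group homomorphism and $M_W$ is a homomorphism, and the previous paragraph produces the desired covering $\xi_{Y,W}$ on $Y$ with $f^*\xi_{Y,W}\cong(W\to X)$.
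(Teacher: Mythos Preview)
Your proof is correct and follows essentially the same route as the paper's: both arguments define the desired action of $\pi_1(Y,y_0)$ on the fiber of $W\to X$ by lifting along the surjection $f_*$, and both establish well-definedness by using that the fiber map $\phi$ (the paper's $q:F\to Q$) is a $\pi_1(X,x_0)$-equivariant surjection. Your packaging of the well-definedness step via $\gamma=\beta_1\beta_2^{-1}$ is a slight rephrasing of the paper's commutative-diagram argument, but the underlying idea is identical.
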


\begin{proof}

The proof consists of an explicit construction of the covering on $Y$ from which the covering $W\to X$ is induced by means of $f$ and is similar to the proof of lemma \ref{lemma:equiv}.

Let $x_0$ be the base point in $X$ and let $y_0=f(x_0)$ be the base point in $Y$. Denote by $F$ the fiber $p_Y^{-1}(y_0)$ of $\xi_Y$ over $y_0$. Since $\xi_X$ is induced from $\xi_Y$, the fiber of $\xi_X$ over $x_0$ can be naturally identified with $F$ as well. Let $M_F^X$ denote the monodromy action of $\pi_1(X,x_0)$ on $F$ corresponding to the covering $\xi_X$ and let $M_F^Y$ denote the monodromy action of $\pi_1(Y,y_0)$ on $F$ corresponding to the covering $\xi_Y$. Since $\xi_X=f^*(\xi_Y)$, we have $M_F^X=M_F^Y\circ f_*$.

Denote by $Q$ the fiber of $W\to X$ over $x_0$. Let $M_Q^X$ denote the monodromy action of $\pi_1(X,x_0)$ on $Q$. Let $q:F\to Q$ denote the restriction of the covering map $\tilde{X}\to W$ to the fiber $F$. For every $\beta\in\pi_1(X,x_0)$ the diagram

$$\xymatrixcolsep{6pc}\xymatrix{ F \ar[d]^q \ar[r]^{M_F^X(\beta)} &  F\ar[d]^{q} \\
             Q \ar[r]^{M_Q^X(\beta)} &  Q
}
$$
commutes (this is equivalent to the fact that $\xi_X$ dominates $W\to X$).

 We will now introduce an action $M_Q^Y$ of $\pi_1(Y,y_0)$ on $Q$ satisfying $M_Q^X=M_Q^Y\circ f_*$. This action will give rise to the required covering on $Y$ from which $W\to X$ is induced.

Let $\alpha\in \pi_1(Y,y_0)$ be any element. Let $\beta$ be any of its preimages under $f_*$. We define $M_Q^Y(\alpha)$ to be $M_Q^X(\beta)$. This element in fact does not depend on the choice of $\beta$. Indeed, let $\beta'$ be another preimage of $\alpha$. Then $M_F^X(\beta)$ and $M_F^X(\beta')$ are equal, since both are equal to $M_F^Y(\alpha)$. But then $M_Q^X(\beta)$ and $M_Q^X(\beta')$ must be the same, since both make the diagram

$$\xymatrixcolsep{6pc}\xymatrix{ F \ar[d]^q \ar[r]^{M_F^X(\beta)=M_F^X(\beta')} &  F\ar[d]^{q} \\
             Q \ar[r]^{M_Q^X(\beta), M_Q^X(\beta')} &  Q
}
$$

commutative and $q:F\to Q$ is surjective.

The facts that $M_Q^Y$ thus defined is an action and that $M_Q^X=M_Q^Y\circ f_*$ are easy to verify.
\end{proof}

For us the following corollary will be important:

\begin{corollary} \label{corollary:dominated}
Suppose the covering $\xi_{X}$ over space $X$ can be induced from a covering over a space of dimension $\leq k$. Then every covering it dominates can also be induced from a space of dimension $\leq k$.
\end{corollary}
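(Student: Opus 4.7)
The goal is to show that if $\xi_X$ factors through a space of dimension $\le k$ and dominates a covering $W \to X$, then $W \to X$ also factors through a space of dimension $\le k$. The two ingredients to combine are Lemma \ref{lemma:dim} (which lets us arrange for the inducing map to be surjective on $\pi_1$ without increasing dimension) and Lemma \ref{lemma:domination} (which transfers dominated coverings across such a surjective inducing map).

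My plan is as follows. Suppose $\xi_X = h^* \xi_Y$ for some map $h:(X,x_0) \to (Y,y_0)$ with $Y$ of dimension $\le k$. First I would apply Lemma \ref{lemma:dim} to the map $h$ to factor it as $h = g \circ f$, where $g:(\tilde{Y},\tilde{y}_0) \to (Y,y_0)$ is a covering map and $f:(X,x_0) \to (\tilde{Y},\tilde{y}_0)$ induces a surjective homomorphism on fundamental groups. Since $g$ is a covering, $\tilde{Y}$ inherits a CW-structure from $Y$ with cells of the same dimensions, so $\tilde{Y}$ still has dimension $\le k$. Setting $\xi_{\tilde{Y}} := g^* \xi_Y$, we get $\xi_X = f^* \xi_{\tilde{Y}}$ with $f_*$ surjective.

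Now the hypotheses of Lemma \ref{lemma:domination} are satisfied: the covering $\xi_X$ is induced from $\xi_{\tilde{Y}}$ via the $\pi_1$-surjective map $f$, and it dominates $W \to X$. Lemma \ref{lemma:domination} then produces a covering $\xi_{\tilde{Y}}'$ over $\tilde{Y}$ such that $W \to X$ is (isomorphic to) $f^* \xi_{\tilde{Y}}'$. Since $\dim \tilde{Y} \le k$, this exhibits $W \to X$ as induced from a covering over a space of dimension $\le k$, which is exactly what the corollary asserts.

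The main conceptual step is the use of Lemma \ref{lemma:dim} to upgrade the inducing map to a $\pi_1$-surjective one; without this upgrade Lemma \ref{lemma:domination} does not apply. There is no real obstacle beyond recognizing that passing to the covering space $\tilde{Y}$ does not increase dimension, which is standard since covering spaces of CW-complexes admit lifted CW-structures of the same dimension.
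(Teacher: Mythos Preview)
Your proof is correct and follows essentially the same route as the paper: first use Lemma~\ref{lemma:dim} to replace the inducing map by a $\pi_1$-surjective one into a covering space $\tilde{Y}$ (which, as you note, still has dimension $\le k$), then apply Lemma~\ref{lemma:domination} to push the dominated covering down to $\tilde{Y}$. The only difference is that you spell out the CW-lifting reason for $\dim\tilde{Y}\le k$, which the paper leaves implicit.
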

\begin{proof}
Lemma \ref{lemma:dim} implies that if a covering on $X$ can be induced from a covering over a space of dimension $\leq k$, then is can be induced from a space of dimension $\leq k$ by means of a map that induces surjective homomorphism on fundamental groups. Lemma \ref{lemma:domination} above implies then that any covering that it dominates can also be induced from a space of dimension $\leq k$. 
\end{proof}

\subsection{ Example}

Before stating general results, we will go back to example in the remark in section \ref{sectionchar}: the covering $\xi_X$ over the space $X=S^1$ given by the map $p_X:S^1\rightarrow S^1$ sending $z\in S^1$ to $p_X(z)=z^3$ (we think of $S^1$ as of unit complex numbers). This covering can be $\mathbf{Z}_3$-labelled in an obvious way (in fact in two ways - we have to choose one of them). Consider the covering $\xi_X\times \ldots \times \xi_X$ over $X^m$, the $m$-dimensional torus. It can be $\mathbf{Z}_3\times \ldots \times \mathbf{Z}_3$ - labelled in an obvious way. The characteristic class from example \ref{example:abelian} with coefficients in $\mathbf{Z}_3$ having degree $m$ doesn't vanish for this covering.

According to corollary \ref{corollary:induce}, if this covering could be induced from a covering on a space of dimension $<m$, it would be also possible to induce the corresponding $\mathbf{Z}_3^m$-labelled covering from a $\mathbf{Z}_3^m$-labelled covering on a space of dimension $<m$. But then naturality of characteristic classes would imply that any degree $m$ characteristic class for the category of  $\mathbf{Z}_3^m$-labelled coverings must vanish on it. Thus the covering we are considering can't be induced from a covering of dimension $<m$.

\section{Coverings over tori}
\label{section:tori}

We now proceed to proving a general result answering the questions: what coverings over a torus $\mathbf{T}=(S^1)^n$ can be induced from coverings over spaces of dimension $k$.
 The result is as follows:

\begin{theorem} 
\label{theorem:mainresult}
  A covering $\xi_{\mathbf{T}}$ over a torus $\mathbf{T}$ can be induced from a covering over $k$-dimensional space if and only if the monodromy group of the covering $\xi_{\mathbf{T}}$ (considered as an abstract group) can be represented as a direct sum of $k$ cyclic groups. In the case $\xi_\mathbf{T}$ can be induced from a covering over some space of dimension $k$, it can also be induced from a covering over $k$-dimensional torus $\left(S^1\right)^k$.
\end{theorem}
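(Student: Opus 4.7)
The plan is to prove the two directions of the equivalence separately and then strengthen the sufficiency to give an inducing map from the torus $\mathbf{T}^k=(S^1)^k$. First note that the monodromy group $M$ is a quotient of $\pi_1(\mathbf{T})=\mathbf{Z}^n$, hence is finitely generated abelian. Writing its invariant factor decomposition $M\cong\mathbf{Z}^r\oplus\mathbf{Z}_{d_1}\oplus\cdots\oplus\mathbf{Z}_{d_s}$ with $d_1\mid\cdots\mid d_s$, the minimum number of cyclic summands in any decomposition is $k_0:=r+s$: for any prime $p$ dividing $d_1$ (or any prime, if $s=0$) one has $\dim_{\mathbf{Z}_p}(M/pM)=r+s$, and no cyclic group contributes more than one dimension to $M/pM$. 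The theorem reduces to the assertion that $\xi_\mathbf{T}$ can be induced from a covering over a space of dimension $k$ if and only if $k\geq k_0$.

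For the necessity direction the plan is to obstruct the induction via the characteristic class of Example~\ref{example:abelian}. Suppose $\xi_\mathbf{T}$ is induced from a covering over a space of dimension at most $k$. By Corollary~\ref{corollary:induce} one may assume that $\xi_\mathbf{T}$ is induced, as an $M$-labelled covering, from an $M$-labelled covering $\xi_{\tilde Y}$ over a space $\tilde Y$ of dimension $\leq k$, by a map $f$ whose induced map on fundamental groups is surjective. Pick a prime $p$ as above so that $\dim_{\mathbf{Z}_p}(M/pM)=k_0$, and let $w$ be the characteristic class of degree $k_0$ with coefficients in $\mathbf{Z}_p$ produced by Example~\ref{example:abelian} applied to $G=M$ and $n=p$. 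On one side, naturality gives $w(\xi_\mathbf{T})=f^{*}w(\xi_{\tilde Y})$, which vanishes when $k<k_0$ because $H^{k_0}(\tilde Y,\mathbf{Z}_p)=0$. On the other side, $w(\xi_\mathbf{T})$ is the cup product of the $k_0$ classes in $H^1(\mathbf{T},\mathbf{Z}_p)$ that are the components of the surjection $\pi_1(\mathbf{T})\to M\to M/pM\cong(\mathbf{Z}_p)^{k_0}$; surjectivity forces these components to be linearly independent in $H^1(\mathbf{T},\mathbf{Z}_p)\cong\mathrm{Hom}(\mathbf{Z}^n,\mathbf{Z}_p)$, and because $H^{*}(\mathbf{T},\mathbf{Z}_p)$ is the exterior algebra on $H^1$ this makes their cup product nonzero. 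The contradiction forces $k\geq k_0$.

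For the sufficiency, given a decomposition $M=C_1\oplus\cdots\oplus C_k$ into cyclic summands of orders $d_1,\ldots,d_k$ (with $d_i=\infty$ allowed), the plan is to construct the inducing map explicitly. Let $\pi:\mathbf{Z}^k\to M$ be the coordinatewise quotient (surjective by construction). Since $\pi_1(\mathbf{T})=\mathbf{Z}^n$ is free abelian, the monodromy $\mathcal{M}_\mathbf{T}$ lifts through $\pi$ to a homomorphism $\phi:\mathbf{Z}^n\to\mathbf{Z}^k$ with $\pi\circ\phi=\mathcal{M}_\mathbf{T}$, and $\phi$ is realized on tori by the map $f:\mathbf{T}\to\mathbf{T}^k$ descending from the linear map $\mathbf{R}^n\to\mathbf{R}^k$ with integer matrix $\phi$. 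Using the construction of section~\ref{section:monodromy} I build a covering $\xi_{\mathbf{T}^k}$ whose fiber and basepoint are identified with those of $\xi_\mathbf{T}$ and whose monodromy action of $\pi_1(\mathbf{T}^k)=\mathbf{Z}^k$ on this fiber is $\pi$. Then $f^{*}\xi_{\mathbf{T}^k}$ has fiber, basepoint, and monodromy action $\pi\circ\phi=\mathcal{M}_\mathbf{T}$ matching those of $\xi_\mathbf{T}$, so the bijection between coverings and monodromy data from section~\ref{section:monodromy} identifies $f^{*}\xi_{\mathbf{T}^k}$ with $\xi_\mathbf{T}$. Since $\mathbf{T}^k$ is a $k$-dimensional torus, this proves both the sufficiency and the final clause of the theorem. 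The main difficulty sits in the necessity step, in the nonvanishing of $w(\xi_\mathbf{T})$; once the exterior-algebra structure of $H^{*}(\mathbf{T},\mathbf{Z}_p)$ and the identification of $H^1(\mathbf{T},\mathbf{Z}_p)$ with $\mathrm{Hom}(\mathbf{Z}^n,\mathbf{Z}_p)$ are used, the obstruction argument is automatic and the sufficiency reduces to a transparent algebraic construction.
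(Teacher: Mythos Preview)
Your proof is correct and follows the same overall strategy as the paper: reduce via Corollary~\ref{corollary:induce} to a $G$-labelled situation and then obstruct with the characteristic class of Example~\ref{example:abelian}. The execution differs in one respect worth noting. The paper first invokes Lemma~\ref{lemma:normal_form} (Smith normal form) together with the notion of equivalent coverings (Lemma~\ref{lemma:equiv}) to replace $\xi_\mathbf{T}$ by the explicit product $\xi_1^s\times\xi_{m_1}\times\cdots\times\xi_{m_t}\times\xi_\infty^r$; on that product the nonvanishing of the class with $\mathbf{Z}_{m_1}$-coefficients is immediate, and the inducing map to $(S^1)^{t+r}$ is just the coordinate projection. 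You bypass this reduction entirely: for necessity you argue nonvanishing directly by observing that the $k_0$ coordinate classes are linearly independent in $H^1(\mathbf{T},\mathbf{Z}_p)$ and appealing to the exterior-algebra structure of $H^{*}(\mathbf{T},\mathbf{Z}_p)$; for sufficiency you lift $\mathcal{M}_\mathbf{T}$ through a surjection $\mathbf{Z}^k\twoheadrightarrow M$ using projectivity of $\mathbf{Z}^n$ and realize the lift by a torus map. Your route is more economical in that it does not rely on Lemmas~\ref{lemma:normal_form} or~\ref{lemma:equiv} at all, whereas the paper's normal-form reduction trades that machinery for making both the nonvanishing and the inducing torus map completely explicit without a separate cohomological computation.
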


 Before we prove this result, we will describe a normal form for the equivalence class of a covering over a torus.

Let $\xi_n$ denote the covering over the circle $S^1$ given by the map $p_m:S^1\rightarrow S^1$ sending $z\in S^1$ to $z^m\in S^1$ (we think of $S^1$ as of the circle of unit length complex numbers). Let also $\xi_\infty$ denote the covering given by the map $p_\infty: \mathbf{R} \rightarrow S^1$ sending $x\in \mathbf{R}$ to $e^{i x}\in S^1$. Then the following lemma holds:

\begin{lemma}\label{lemma:normal_form}
Every covering over a torus $\mathbf{T}=(S^1)^n$ is equivalent to the covering $\xi_1^s\times\xi_{m_1}\times\xi_{m_2}\times\ldots\times\xi_{m_t}\times\xi_\infty^r$ for some integer numbers $s,t,r\geq 0$ with $s+t+r=n$ and natural numbers $m_1,\ldots,m_t\geq 2$ satisfying the divisibility condition $m_1|m_2|\ldots|m_t$.
\end{lemma}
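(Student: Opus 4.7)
The plan is to reduce the classification of coverings of $\mathbf{T}$ up to equivalence to Smith normal form applied to $\pi_1(\mathbf{T},0)=\mathbf{Z}^n$.

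First I would note that the equivalence relation of the previous subsection identifies two coverings of $\mathbf{T}$ precisely when their monodromy representations have the same kernel: the commuting triangle forces $\ker M_{\mathbf{T},1}=\ker M_{\mathbf{T},2}$ since $g$ is an isomorphism, and conversely equal kernels give a canonical $g$ between the two isomorphic quotients. Hence equivalence classes of coverings over $\mathbf{T}$ are in bijection with subgroups $K\subseteq\mathbf{Z}^n$ via $K=\ker \mathcal{M}_{\mathbf{T}}$.

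Next, apply the structure theorem for finitely generated abelian groups (equivalently, Smith normal form on the inclusion matrix of $K$): there is a $\mathbf{Z}$-basis $e_1,\ldots,e_n$ of $\mathbf{Z}^n$ and positive integers $d_1\mid d_2\mid\cdots\mid d_\rho$, with $\rho=\rk K$, such that $K=\bigoplus_{i=1}^{\rho} d_i\mathbf{Z}\,e_i$. Let $s$ be the number of $d_i$'s equal to $1$, relabel the remaining $t:=\rho-s$ of them as $m_1\mid\cdots\mid m_t$ (all $\geq 2$), and set $r:=n-\rho$ so that the free directions of $\mathbf{Z}^n/K$ are accounted for. Then $s+t+r=n$ and the divisibility $m_1\mid\cdots\mid m_t$ is automatic from the elementary-divisor form.

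Finally, match this data with the standard product covering. The basis $(e_i)$ is an element of $GL_n(\mathbf{Z})$ which descends from $\mathbf{R}^n$ to a self-homeomorphism of $\mathbf{T}=\mathbf{R}^n/\mathbf{Z}^n$, equivalently a new identification of $\mathbf{T}$ with $(S^1)^n$ under which the $i$-th coordinate loop corresponds to $e_i\in\pi_1(\mathbf{T})$. Under this identification the product covering $\xi_1^s\times\xi_{m_1}\times\cdots\times\xi_{m_t}\times\xi_\infty^r$ over $(S^1)^n$ becomes a covering of $\mathbf{T}$ whose monodromy kernel is exactly $K$: each factor $\xi_1$ contributes $\mathbf{Z}\,e_i$, each $\xi_{m_j}$ contributes $m_j\mathbf{Z}\,e_{s+j}$, and each $\xi_\infty$ contributes $\{0\}$ in its coordinate, which together reassemble the direct-sum decomposition of $K$. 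By the first step this realization of the product covering is equivalent to the given one. The only substantive ingredient is Smith normal form; beyond it the matching step is direct inspection of the three elementary coverings, so no serious obstacle arises.
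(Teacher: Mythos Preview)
Your argument is correct and follows essentially the same route as the paper's proof: both identify the equivalence class of a covering with the kernel $K\subset\mathbf{Z}^n$ of its monodromy and then apply Smith normal form to that sublattice to produce the invariants $s$, $m_1\mid\cdots\mid m_t$, $r$. Your explicit observation that ``equivalent $\Leftrightarrow$ same kernel'' and your realization of the basis change as a $GL_n(\mathbf{Z})$ self-homeomorphism of $\mathbf{T}$ are cosmetic additions, not a different method.
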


\begin{remark}
Note that the covering $\xi_1^s$ in the representation above is just the trivial degree 1 covering over the $s$-dimensional torus.
\end{remark}

\begin{proof}
Let $t_0\in \mathbf{T}$ be an arbitrary point of the torus and let $M_\mathbf{T}:\pi_1(\mathbf{T},t_0)\rightarrow S(p_\mathbf{T}^{-1}(t_0))$ be the monodromy representation of the covering $\xi_\mathbf{T}$. Choose a basis $e_1,\ldots,e_n$ for the free abelian group $\pi_1(\mathbf{T},t_0)$ so that $\pi_1(\mathbf{T},t_0)$ gets identified with the group $\mathbf{Z}^n$ spanned on the generators $e_1,\ldots,e_n$. The kernel of the homomorphism $M_\mathbf{T}$ is a subgroup of $\mathbf{Z}^n$, hence it is also a free abelian group. Choose a basis $E_1,\ldots,E_q$ for the kernel. We can express each vector $E_i$ as an integer linear combination of the basis vectors $e_j$: $E_i=\sum_j{a_{i,j} e_j}$. By a suitable change of bases $e$ and $E$ for the lattice and its sublattice we can bring the matrix $(a_{i,j})$ to its Smith normal form, that is after a change of bases we can get that $E_1=e_1, \ldots, E_s=e_s, E_{s+1}=m_1\cdot e_{s+1}, E_{s+2}=m_2\cdot e_{s+2},\ldots,E_q=m_t\cdot e_q$, where $s$ is the number of ones in the Smith normal form of the matrix, $q=s+t$ and $m_1,\ldots,m_t \geq 2$ are integers with the divisibility property $m_1|m_2|\ldots|m_t$.

This means that the monodromy representation $M_\mathbf{T}$, considered as a mapping onto its image, is isomorphic to the product of trivial maps $\mathbf{Z}\rightarrow 0$ in the first $s$ coordinates, quotient maps $\mathbf{Z}\rightarrow \mathbf{Z}_{m_i}$ in the next $t$ coordinates and identity maps $\mathbf{Z}\rightarrow \mathbf{Z}$ in the remaining $r=n-s-t$ coordinates. Thus the covering $\xi_\mathbf{T}$ is equivalent to the covering $\xi_1^s\times\xi_{m_1}\times\xi_{m_2}\times\ldots\times\xi_{m_t}\times\xi_\infty^r$.
\end{proof}

We now proceed to the proof of theorem \ref{theorem:mainresult}
\begin{proof} 

Let $G$ denote the monodromy group of the covering $\xi_\mathbf{T}$. From lemma \ref{lemma:normal_form} above the covering $\xi_{\mathbf{T}}$ is equivalent to the covering $\xi_1^s\times\xi_{m_1}\times\xi_{m_2}\times\ldots\times\xi_{m_t}\times\xi_\infty^r$ for some integers $s,t,r\geq 0$ with $s+t+r=n$ and natural numbers $m_1,\ldots,m_t$ satisfying $m_1|m_2|\ldots|m_t$.

The monodromy group $G$ of this covering is isomorphic to the sum of $k=t+r$ cyclic groups: $G=\mathbf{Z}_{m_1}\oplus\ldots\oplus\mathbf{Z}_{m_t}\oplus\mathbf{Z}^r$. This group cannot be represented as a sum of less than $k=t+r$ cyclic groups ($k$ being the dimension of the $\mathbf{Z}_p$-vector space $G/p G$ for $p$ being some prime divisor of $p_1$).

The covering $\xi_1^s\times\xi_{m_1}\times\xi_{m_2}\times\ldots\times\xi_{m_t}\times\xi_\infty^r$ clearly can be induced from the covering $\xi_{m_1}\times\xi_{m_2}\times\ldots\times\xi_{m_t}\times\xi_\infty^r$ over $k=t+r$-dimensional torus via the projection on the last $k$ coordinates. This projection map induces a surjective homomorphism on the fundamental groups, hence the covering $\xi_\mathbf{T}$, being equivalent to the covering $\xi_1^s\times\xi_{m_1}\times\xi_{m_2}\times\ldots\times\xi_{m_t}\times\xi_\infty^r$, also can be induced from a covering over $k$-dimensional torus according to lemma \ref{lemma:equiv}. 

It remains to show that the covering $\xi_\mathbf{T}$ can't be induced from a covering over a space of dimension $<k$. Suppose to the contrary that it can. Lemma \ref{lemma:equiv} then tells us that the equivalent covering $\xi_1^s\times\xi_{n_1}\times\xi_{n_2}\times\ldots\times\xi_{n_t}\times\xi_\infty^r$ also can be induced from a space of dimension $<k$.

Covering  $\xi_1^s\times\xi_{n_1}\times\xi_{n_2}\times\ldots\times\xi_{n_t}\times\xi_\infty^r$ can be $G$-labelled in a natural way ($G$ being its monodromy group $\mathbf{Z}_{m_1}\oplus\ldots\oplus\mathbf{Z}_{m_t}\mathbf{Z}\oplus\mathbf{Z}^r$). Corollary \ref{corollary:induce} then tells us that this $G$-labelled covering can be induced from a $G$-labelled covering over a space of dimension $<k$. In particular the value of any degree $k$ characteristic class for the category of $G$-labelled coverings must vanish on it.

However if we consider the characteristic class from example \ref{example:abelian} with coefficients in $\mathbf{Z}_{m_1}$, we find that it doesn't vanish!

\end{proof}

\begin{definition}
The minimal number $k$ such that a finitely-generated abelian group $G$ can be represented as a direct sum of $k$ cyclic groups is called the \textbf{rank} of $G$.
\end{definition}

Corollary \ref{corollary:dominated} implies that a slightly stronger result holds as well:

\begin{theorem}
Suppose the covering $\xi_{\mathbf{T}}$ over a torus $\mathbf{T}$ has monodromy group of rank $k$. Then it is not dominated by any covering that can be induced from a space of dimension  strictly smaller than $k$.
\end{theorem}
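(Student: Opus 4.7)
The plan is to derive this theorem directly by combining Corollary \ref{corollary:dominated} with Theorem \ref{theorem:mainresult}, arguing by contradiction.

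First, I would assume for contradiction that there exists a covering $\eta_{\mathbf{T}}$ over $\mathbf{T}$ that dominates $\xi_{\mathbf{T}}$ and that can itself be induced from a covering over some space $Y$ of dimension strictly less than $k$. The goal is to use this to manufacture an inducing of $\xi_{\mathbf{T}}$ itself from a low-dimensional space, which will be forbidden by the rank hypothesis.

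Next, I would invoke Corollary \ref{corollary:dominated} applied to $\eta_{\mathbf{T}}$: since $\eta_{\mathbf{T}}$ is induced from a covering over a space of dimension $\leq \dim Y < k$, every covering it dominates can also be induced from a space of dimension $\leq \dim Y < k$. Since $\eta_{\mathbf{T}}$ dominates $\xi_{\mathbf{T}}$ by assumption, this gives us that $\xi_{\mathbf{T}}$ is itself induced from a covering over a space of dimension strictly less than $k$.

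Finally, this contradicts Theorem \ref{theorem:mainresult}: by hypothesis the monodromy group of $\xi_{\mathbf{T}}$ has rank exactly $k$, meaning it cannot be written as a direct sum of fewer than $k$ cyclic groups, and so by that theorem $\xi_{\mathbf{T}}$ cannot be induced from any covering over a space of dimension less than $k$. Since the argument is essentially a chase through the two previous results, there is no substantive obstacle; the only thing worth checking is that the definition of rank just stated matches the cyclic-decomposition characterization used in the proof of Theorem \ref{theorem:mainresult}, which is immediate. In effect this theorem simply packages Corollary \ref{corollary:dominated} and Theorem \ref{theorem:mainresult} into a single statement about domination.
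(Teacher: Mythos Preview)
Your argument is correct and matches the paper's own reasoning exactly: the paper introduces this theorem with the sentence ``Corollary \ref{corollary:dominated} implies that a slightly stronger result holds as well,'' and the implicit proof is precisely the contradiction you spell out, combining Corollary \ref{corollary:dominated} with Theorem \ref{theorem:mainresult}.
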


Later, in algebraic context, we will need a version of this result dealing with a tower of coverings dominating a given one. We will formulate the result now:

\begin{theorem}
\label{theorem:mainresult_comp}
Suppose $\xi_T$ is a covering over the torus $T$ with monodromy group of rank $k$. Let $f:T_s\to T$ be a covering map over $T$ that factors as the composition of covering maps $T_s\xrightarrow{f_s} T_{s-1} \to \ldots \to T_1 \xrightarrow{f_1} T_0=T$ and assume that each covering $f_i:T_i\to T_{i-1}$ can be induced from a covering over a space of dimension $\leq k_i$. Then the rank of monodromy group of the covering $f^*\xi_T$ is at least $k-\sum k_i$. In particular if $\sum k_i<k$, the covering $f^*\xi_T$ is not trivial.
\end{theorem}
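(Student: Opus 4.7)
The plan is to reduce the theorem to additive bookkeeping about ranks of nested subgroups of the abelian fundamental group $\pi_1(T)=\mathbf{Z}^n$. Write $f=f_1\circ\cdots\circ f_s$ and set $A_i=(f_1\circ\cdots\circ f_i)_*\pi_1(T_i)$, so that
\[
\pi_1(T)=A_0\supseteq A_1\supseteq\cdots\supseteq A_s=f_*\pi_1(T_s)
\]
is a tower of free abelian subgroups of $\mathbf{Z}^n$ (each $(f_1\circ\cdots\circ f_i)_*$ is injective since each $f_j$ is a covering map). The monodromy group of $f^*\xi_T$ is $M_T(A_s)\subset G$, where $G=M_T(\pi_1(T))$ has rank $k$, and the goal is to show $\rk M_T(A_s)\geq k-\sum k_i$.

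First I would bound each successive quotient $A_{i-1}/A_i$. Since $T_{i-1}$ is a connected covering of the torus $T$, its universal cover is $\mathbf{R}^n$ with deck action by $A_{i-1}$; splitting $\mathbf{R}^n=V\oplus V^\perp$ with $V=\operatorname{span}_{\mathbf{R}}A_{i-1}$ shows $T_{i-1}\cong\mathbf{R}^{n-b}\times(S^1)^{b}$ where $b=\rk A_{i-1}$, so $T_{i-1}$ is homotopy equivalent to a torus. All notions in Theorem \ref{theorem:mainresult} are homotopy invariants, so the theorem applies to $T_{i-1}$: because $f_i$ is induced from a covering over a space of dimension $\leq k_i$, the monodromy group of $f_i$ has rank at most $k_i$. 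Since $\pi_1(T_{i-1})$ is abelian, that monodromy group is literally $\pi_1(T_{i-1})/(f_i)_*\pi_1(T_i)$, which under $(f_1\circ\cdots\circ f_{i-1})_*$ is identified with $A_{i-1}/A_i$. Hence $\rk(A_{i-1}/A_i)\leq k_i$.

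Next I would telescope. For a finitely generated abelian group the rank equals the minimum number of generators, and this quantity is subadditive along filtrations: generators of $A_{i-1}/A_i$ lift into $A_{i-1}/A_s$, and together with generators of $A_i/A_s$ they generate $A_{i-1}/A_s$. Induction on $s$ yields
\[
\rk(\pi_1(T)/f_*\pi_1(T_s))\leq\sum_{i=1}^{s}\rk(A_{i-1}/A_i)\leq\sum_{i=1}^{s}k_i.
\]
Finally I would push this bound across $M_T$. The surjection $M_T\colon\pi_1(T)\twoheadrightarrow G$ descends to a surjection $\pi_1(T)/f_*\pi_1(T_s)\twoheadrightarrow G/M_T(f_*\pi_1(T_s))$, so the latter quotient also has rank $\leq\sum k_i$; one further use of subadditivity inside $G$ gives
\[
\rk M_T(f_*\pi_1(T_s))\geq\rk(G)-\rk(G/M_T(f_*\pi_1(T_s)))\geq k-\sum_{i=1}^{s} k_i,
\]
which is exactly the desired bound.

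The only delicate point is the first step, where Theorem \ref{theorem:mainresult} is invoked on the covering space $T_{i-1}$ rather than on a literal torus; I would make the homotopy-invariance remark explicit. Everything else is pure abelian group theory, and the hard direction of Theorem \ref{theorem:mainresult} (the characteristic-class obstruction) is precisely what supplies the per-step bound $\rk(A_{i-1}/A_i)\leq k_i$.
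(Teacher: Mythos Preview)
Your proof is correct and rests on the same two ingredients as the paper's: Theorem~\ref{theorem:mainresult} applied to each $f_i$ to bound the rank of its monodromy group by $k_i$, and subadditivity of rank along a filtration of finitely generated abelian groups. The organization differs. The paper first proves a standalone lemma fitting the monodromy groups of $f$, $g$, and $g\circ f$ into a short exact sequence (valid for any pair of composable coverings whose composite has abelian monodromy), then applies it to the tower $T_s\to\cdots\to T$ extended by the total space of $f^*\xi_T$ on top, and finally uses that this extended composite dominates $\xi_T$ to get the lower bound $k$. You instead track the explicit filtration $A_0\supseteq\cdots\supseteq A_s$ inside $\pi_1(T)\cong\mathbf{Z}^n$ and push the resulting bound on $\rk(A_0/A_s)$ through the surjection $M_T$. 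Your route is more concrete and sidesteps the paper's exact-sequence lemma entirely; the paper's lemma is stated in greater generality (arbitrary base, not only tori), but that generality is not used elsewhere in the paper. Your observation that Theorem~\ref{theorem:mainresult} is being invoked on the covering space $T_{i-1}$ rather than on a literal compact torus, and that this is legitimate by homotopy invariance, is apt---the paper leaves this point implicit.
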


A couple of lemmas will be needed to prove this theorem:

\begin{lemma}
Let $A,B,C$ be three finitely generated abelian groups that fit into the exact sequence $$0\to A \to B \to C \to 0$$ Then $\rk B\leq \rk A+\rk B$.
\end{lemma}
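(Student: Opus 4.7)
The statement as written contains an obvious typo (``$\rk B\leq \rk A+\rk B$''); the intended inequality is surely $\rk B\le \rk A+\rk C$, and that is what I will sketch.

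The plan is to translate the paper's definition of rank (minimal number of cyclic summands in a direct-sum decomposition) into the more flexible notion of \emph{minimal number of generators}, and then do a routine lifting argument. For a finitely generated abelian group $G$ these two quantities agree: if $G=\mathbf{Z}^{r}\oplus \mathbf{Z}_{d_1}\oplus\cdots\oplus \mathbf{Z}_{d_t}$ with $d_1\mid\cdots\mid d_t$ (invariant-factor form), then $G$ obviously has $r+t$ generators; conversely, any set of $k$ generators realizes $G$ as a quotient of $\mathbf{Z}^k$, so the Smith normal form argument used in the proof of Lemma \ref{lemma:normal_form} presents $G$ as a sum of at most $k$ cyclic groups. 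So I would first record (or cite) this equivalence, and then work exclusively with minimal generating sets.

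With that reduction in hand, the main step is straightforward. Write the exact sequence as $0\to A \xrightarrow{\iota} B \xrightarrow{\pi} C\to 0$. Pick a generating set $\{a_1,\ldots,a_p\}$ of $A$ with $p=\rk A$ and a generating set $\{c_1,\ldots,c_q\}$ of $C$ with $q=\rk C$. For each $j$, choose some lift $b_j\in B$ with $\pi(b_j)=c_j$; this is possible because $\pi$ is surjective. I claim that $\{\iota(a_1),\ldots,\iota(a_p),b_1,\ldots,b_q\}$ generates $B$. Given $b\in B$, write $\pi(b)=\sum_j n_j c_j$ for some integers $n_j$; then $\pi(b-\sum_j n_j b_j)=0$, so $b-\sum_j n_j b_j$ lies in $\ker\pi=\iota(A)$, and can therefore be written as $\iota(\sum_i m_i a_i)$ for some integers $m_i$. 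Hence $b=\sum_i m_i\iota(a_i)+\sum_j n_j b_j$, proving the claim. In particular $B$ admits $p+q=\rk A+\rk C$ generators, so $\rk B\le \rk A+\rk C$.

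There is no real obstacle here; the only subtle point is the identification of the paper's rank with the minimum number of generators (the naive guess ``number of invariant factors plus free rank'' is correct, but must be justified so that, for example, $\rk(\mathbf{Z}_6)$ is $1$ rather than $2$). Once that is in place, the lifting argument above is immediate and gives the desired inequality; this is exactly the standard fact that the minimal number of generators is subadditive on short exact sequences of finitely generated (abelian) modules.
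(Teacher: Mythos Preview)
Your argument is correct, and your identification of the typo is of course right. Your route, however, differs from the paper's. The paper does not pass through the characterization of $\rk$ as the minimal number of generators; instead it picks a prime $p$ for which $\rk B=\dim_{\mathbf{Z}_p}(B\otimes\mathbf{Z}_p)$ (such a prime exists by the invariant-factor decomposition: take any prime divisor of the smallest invariant factor, or any prime if $B$ is free), tensors the short exact sequence with $\mathbf{Z}_p$ to get the right-exact sequence $A\otimes\mathbf{Z}_p\to B\otimes\mathbf{Z}_p\to C\otimes\mathbf{Z}_p\to 0$, and reads off $\dim B\otimes\mathbf{Z}_p\le \dim A\otimes\mathbf{Z}_p+\dim C\otimes\mathbf{Z}_p\le \rk A+\rk C$.

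Both proofs are short and elementary; each needs one small auxiliary identification with the paper's definition of rank (yours: $\rk G=$ minimal number of generators; the paper's: $\rk G=\max_p\dim G\otimes\mathbf{Z}_p$, attained at some $p$). Your lifting argument is perhaps more self-contained for a reader unfamiliar with tensor products, while the paper's approach fits naturally with its earlier use of $G/pG$ in the characteristic-class examples and in the proof of Theorem~\ref{theorem:mainresult}.
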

\begin{proof}
Let $p$ be a prime number such that $\rk B=\dim B\otimes \mathbf Z_p$. The exact sequence of $\mathbf Z_p$ vector spaces $$A\otimes \mathbf Z_p \to B\otimes \mathbf Z_p \to C \otimes \mathbf Z_p \to 0$$ shows that $\dim B\otimes \mathbf Z_p \leq \dim A\otimes \mathbf Z_p + \dim C\otimes \mathbf Z_p$. Since $\rk A\geq \dim A\otimes \mathbf Z_p$ and similarly for $C$, we have $\rk B = \dim B\otimes \mathbf Z_p\leq \rk A+\rk C$
\end{proof}

This algebraic lemma is applicable in topological context due to the following:

\begin{lemma}
Let $(X_3,x_3)\xrightarrow{f} (X_2,x_2)\xrightarrow{g} (X_1,x_1)$ be two covering maps and assume that $X_2$ is connected and the monodromy group of $g\circ f$ is abelian. Let $G(f),G(g),G(g\circ f)$ be the monodromy groups of the coverings $f,g,g\circ f$ respectively. These monodromy groups fit into an exact sequence $$0 \to G(f) \to G(g\circ f) \to G(g) \to 0 $$
\end{lemma}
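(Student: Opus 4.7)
The plan is to construct the two maps in the sequence explicitly from the descriptions of fibers and monodromy, identify the kernel of the right-hand map directly, and then verify exactness in the middle.

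For the surjection on the right I would exploit the natural partition $(g\circ f)^{-1}(x_1)=\bigsqcup_{y\in g^{-1}(x_1)}f^{-1}(y)$. The $(g\circ f)$-monodromy of any loop $\gamma\subset X_1$ permutes this partition exactly as its $g$-monodromy permutes $g^{-1}(x_1)$, so sending a permutation of $(g\circ f)^{-1}(x_1)$ to its induced permutation of $g^{-1}(x_1)$ yields a well-defined surjective homomorphism $\rho:G(g\circ f)\to G(g)$. An element of $\ker\rho$ is represented by a loop whose $g$-monodromy fixes every $y\in g^{-1}(x_1)$, i.e.\ an element of $\bigcap_{y}g_\ast\pi_1(X_2,y)$. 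Here I would use that $X_2$ is connected and that $G(g)$ is abelian (being a quotient of the abelian $G(g\circ f)$): the $\pi_1(X_1,x_1)$-action on $g^{-1}(x_1)$ is transitive and factors through an abelian group, so all stabilizers coincide with $g_\ast\pi_1(X_2,x_2)$. Consequently $\ker\rho$ is precisely the image of $g_\ast\pi_1(X_2,x_2)$ in $G(g\circ f)$.

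For the injection on the left I would define $\psi:G(f)\to G(g\circ f)$ by sending the $f$-monodromy class of $\tilde\gamma\in\pi_1(X_2,x_2)$ to the $(g\circ f)$-monodromy class of $g\circ\tilde\gamma\in\pi_1(X_1,x_1)$. The main step, and the hard part of the lemma, is well-definedness: if $\tilde\gamma$ acts trivially on $f^{-1}(x_2)$, then $g_\ast\tilde\gamma$ must act trivially on every $f^{-1}(y)$. This is where the abelian hypothesis matters. Given $y\in g^{-1}(x_1)$, connectedness of $X_2$ provides a path $\delta$ from $x_2$ to $y$; then $\alpha=[g\circ\delta]\in\pi_1(X_1,x_1)$ satisfies $\alpha\cdot f^{-1}(x_2)=f^{-1}(y)$ under the $(g\circ f)$-monodromy action. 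Since $G(g\circ f)$ is abelian, the permutations $\alpha$ and $g_\ast\tilde\gamma$ commute on $(g\circ f)^{-1}(x_1)$, which transports triviality on $f^{-1}(x_2)$ to triviality on $f^{-1}(y)$: for any $z=\alpha\cdot z'$ with $z'\in f^{-1}(x_2)$ one gets $(g_\ast\tilde\gamma)\cdot z=\alpha\cdot(g_\ast\tilde\gamma)\cdot z'=\alpha\cdot z'=z$. Injectivity of $\psi$ is then immediate: the restriction of the $(g\circ f)$-monodromy of $g_\ast\tilde\gamma$ to $f^{-1}(x_2)$ already coincides with the $f$-monodromy of $\tilde\gamma$.

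Exactness in the middle follows at once, since the image of $\psi$ is by construction the image of $g_\ast\pi_1(X_2,x_2)$ in $G(g\circ f)$, which was identified with $\ker\rho$ in the first step. The main obstacle in the whole argument is establishing well-definedness of $\psi$; once the abelian hypothesis is deployed via the commutation trick above, everything else reduces to formal manipulation of fibers and the monodromy correspondence.
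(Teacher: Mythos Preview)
Your argument is correct and matches the paper's in substance; the only organisational difference is that the paper builds the map in the opposite direction. Rather than defining $\psi:G(f)\to G(g\circ f)$ and checking well-definedness, the paper defines a restriction map $r:\ker\rho\to G(f)$ (send a permutation of $(g\circ f)^{-1}(x_1)$ lying in the kernel to its restriction to $f^{-1}(x_2)$) and checks that $r$ is injective. Your well-definedness step and the paper's injectivity step are the same computation: a loop $g_*\tilde\gamma$ with trivial action on $f^{-1}(x_2)$ has trivial action on every $f^{-1}(y)$, proved by conjugating via a path to $y$ and invoking that $G(g\circ f)$ is abelian. Your version has the minor advantage that it makes the exact-sequence map $G(f)\hookrightarrow G(g\circ f)$ explicit, whereas the paper only exhibits its inverse on the kernel; the paper's version avoids the separate stabiliser argument you use to identify $\ker\rho$ with the image of $g_*\pi_1(X_2,x_2)$, since $\ker\rho=M_{g\circ f}(\ker M_g)$ is immediate from the commuting triangle without any abelian hypothesis.
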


\begin{proof}
Let $M_{g}$ and $M_{g\circ f}$ denote the monodromy representations of $\pi_1(X_1,x_1)$ on the permutation groups $S(g^{-1}(x_1))$ and $S((g\circ f)^{-1}(x_1))$ and let $M_f$ denote the monodromy representation of $\pi_1(X_2,x_2)$ on $S(f^{-1}(x_2))$. 

The map $f$ maps the fiber $(g\circ f)^{-1}(x_1)$  to $g^{-1}(x_1)$ and hence induces a map $f_*: S((g\circ f)^{-1}(x_1)) \to S(g^{-1}(x_1))$. The restriction of this map to $G(g\circ f)$ maps $G(g\circ f)$ onto $G(g)$ because the following diagram commutes:

$$\xymatrixcolsep{4pc}\xymatrix{
\pi_1(X_1,x_1) \ar@{->>}[r]^-{M_{g\circ f}} \ar@{->>}[dr]+UL+<2ex,0ex>_-{M_g}& G(g\circ f) \subset  S((g\circ f)^{-1}(x_1)) \ar@<4ex>[d]^-{f_*} \\
                                  & G(g) \subset  S(g^{-1}(x_1))
}$$

The kernel of the restriction of $f_*$ to $G(g\circ f)$ is equal to  $M_{g\circ f}(\ker M_g)$. Now we claim that $M_{g\circ f}(\ker M_g)$ is isomorphic to $G(f)$. 

Let $r:M_{g\circ f}(\ker M_g)\to G(f)$ be the following map: it sends a permutation of the fiber $(g\circ f)^{-1}(x_1)$ that belongs to $M_{g\circ f}(\ker M_g)$ to its restriction to the fiber $f^{-1}(x_2)$. This restriction is a permutation of $f^{-1}(x_2)$ that lies in $G(f)$ because if the initial permutation is realized as the monodromy along a loop $\gamma$ whose class in $\pi_1(X_1,x_1)$ is in the kernel of $M_g$, then this loop lifts to a loop based at $x_2$ and the monodromy of $f$ realized along this loop is the required permutation in $G(f)$.

The map $r$ is clearly a group homomorphism. It is onto because a permutation in $G(f)$ can be realized as the monodromy of $f$ along a loop in $X_2$ based at $x_2$. The monodromy of $g\circ f $ along the image of this loop under $g$ is a preimage of the permutation we started with under $r$. 

Finally we want to show that the map $r$ is one-to-one. Suppose that a permutation in $M_{g\circ f}(\ker M_g)$ restricts to a trivial permutation on the fiber $f^{-1}(x_2)$. Since this permutation is in $M_{g\circ f}(\ker M_g)$, it can be realized as the monodromy of $g\circ f $ along a loop $\gamma$ in $X_1$ based at $x_1$ that lifts to a closed loop in $X_2$ with any choice of the lift of $x_1$ to a point in $g^{-1}(x_1)$. Let $\alpha$ be one such lift with $\alpha(0)=\tilde{x}_2\in g^{-1}(x_1)$. It is enough to show that the monodromy of $f$ along this loop is trivial. Choose a path $\beta$ in $X_2$ connecting $x_2$ to $\tilde{x}_2$. The monodromy of $g\circ f$ along the loop $g_*(\beta \alpha \beta^{-1})$ is  $M_{g\circ f}(g_*\beta g_*\alpha g_*\beta ^{-1})=M_{g\circ f}(g_*\beta) M_{g\circ f}(g_*\alpha) M_{g\circ f}(g_*\beta) ^{-1}=M_{g\circ f}(\gamma)$ since the monodromy group of $g\circ f$ is abelian. In particular the monodromy of $g\circ f$ along $g_*(\beta \alpha \beta^{-1})$ resricts to the trivial permutation on $f^{-1}(x_1)$, which means that the monodromy of $f$ along $\alpha$ restricts to the trivial permutation of $f^{-1}(\tilde{x}_2)$. Since this conclusion holds for any lift of $\gamma$ to a loop $\alpha$ based at any point $\tilde{x}_2\in g^{-1}(x_1)$, the monodromy of $g\circ f $ along $\gamma$ is trivial.
\end{proof}

This lemma can be applied to prove the following claim about coverings over a torus:

\begin{lemma}
Let $T_k\xrightarrow{f_k} T_{k-1} \to \ldots \to T_1 \xrightarrow{f_1} T_0$ be a sequence of covering maps, where $T_0$ is a torus, $T_1,\ldots,T_{k-1}$ are connected, while $T_k$ is not necessarily connected. Then rank of the monodromy group of the composite covering $f_k\circ\ldots\circ f_1$ is smaller than or equal to the sum of the ranks of the monodromy groups of the coverings $f_i$.
\end{lemma}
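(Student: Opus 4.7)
The plan is to prove the lemma by induction on $k$, the length of the tower. The base case $k=1$ is immediate, since the claim reduces to $\rk G(f_1) \leq \rk G(f_1)$.

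For the inductive step, I would split the tower as $T_k \xrightarrow{f_k} T_{k-1} \xrightarrow{g} T_0$, where $g = f_1 \circ \cdots \circ f_{k-1}$ denotes the composite of the lower $k-1$ maps. First I check that the hypotheses of the preceding exact-sequence lemma are met: the middle space $T_{k-1}$ is connected by assumption, and the monodromy group of the total composite $g \circ f_k \colon T_k \to T_0$ is a quotient of $\pi_1(T_0) = \mathbf{Z}^n$, hence abelian. Applying that lemma produces a short exact sequence
$$0 \to G(f_k) \to G(g \circ f_k) \to G(g) \to 0,$$
and the algebraic rank inequality lemma then yields
$$\rk G(g \circ f_k) \leq \rk G(f_k) + \rk G(g).$$

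Next I would apply the inductive hypothesis to the truncated tower $T_{k-1} \to T_{k-2} \to \cdots \to T_1 \to T_0$, which still has the torus $T_0$ as its base and retains the connectedness of all intermediate spaces $T_1, \ldots, T_{k-2}$. This gives $\rk G(g) \leq \sum_{i=1}^{k-1} \rk G(f_i)$, and combining the two bounds yields the desired estimate $\rk G(g \circ f_k) \leq \sum_{i=1}^{k} \rk G(f_i)$.

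The main thing to get right is the choice of which map to peel off in the induction. Peeling off the topmost map $f_k$ (rather than the bottom one $f_1$) keeps the base space fixed at the torus $T_0$ throughout the inductive descent, so that the hypothesis ``monodromy of the total composite is abelian'' in the exact-sequence lemma is automatically satisfied at every stage as a quotient of $\pi_1(T_0) = \mathbf{Z}^n$. If instead one tried to induct by removing $f_1$ first, the remaining tower would sit over a connected cover of a torus (typically not itself a torus), forcing one to strengthen the inductive statement to allow base spaces with abelian fundamental group; the present setup sidesteps this complication entirely.
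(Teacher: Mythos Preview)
Your proof is correct and follows the same approach as the paper's: induction on $k$, invoking the exact-sequence lemma for monodromy groups together with the algebraic rank inequality at each step. The paper's own proof is in fact a single sentence pointing to exactly these ingredients, so your write-up is essentially a fleshed-out version of it; your observation about peeling off $f_k$ rather than $f_1$ is a reasonable way to organise the induction, though note that a connected cover of a torus is again homotopy equivalent to a torus, so the alternative would also go through with only cosmetic changes.
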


\begin{proof}
The proof is a simple induction on $k$ and based on the fact that the fundamental group of a torus is a finitely generated abelian group and the previous two lemmas. 
\end{proof}

Finally this allows us to prove theorem \ref{theorem:mainresult_comp}:
\begin{proof}
Theorem \ref{theorem:mainresult} implies that the rank of monodromy group of the covering $f_i:T_i\to T_{i-1}$ is at most $k_i$. If we denote by $\tilde{k}$ the rank of the monodromy of the covering $f^*\xi_T$, then the lemma above implies that the rank of the composition of the covering $f^*\xi_T$ with $f$ is at most $\tilde k+ \sum k_i$. On the other hand this rank is at least $k$, since this covering dominates the covering $\xi_T$. Hence $\tilde k\geq k-\sum k_i$.
\end{proof}

\section{Klein's Resolvent Problem}
\label{section:formulations}

\subsection{Formulation}

Klein's resolvent problem is the problem of deciding whether a given algebraic equation depending on several independent parameters admits a rational transformation transforming it into an equation depending on a smaller number of algebraically independent parameters (see \cite{Cheb1}).

More precisely we can introduce the following definition:

\begin{definition} \label{definition:rationally_induced}
An algebraic function $\mathbf{z}$ defined over a Zariski open subset of a variety $X$ is said to be \textbf{rationally induced} from an algebraic function $\mathbf{w}$ defined over a Zariski open subset of a variety $Y$ if there exists a Zariski open subset $U$ of $X$, a rational morphism $r$ from $X$ to $Y$ and a rational function $R$ on $X\times \mathbf{C}$ such that:
\begin{itemize}
\item{the function $\mathbf{z}(x)$ is defined for all $x\in U$}
\item{the function $R(x,\mathbf{w}(r(x)))$ is defined for all $x\in U$}
\item{the function $\mathbf{z}(x)$ is a branch of $R(x,\mathbf{w}(r(x)))$ for $x\in U$}
\end{itemize}
(it is assumed that the functions $r(x)$, $\mathbf{w}(r(x))$ and $R(x,\mathbf{w}(r(x)))$ are all defined for $x\in U$)
\end{definition}

This definition can be used to formulate Klein's resolvent problem precisely:

\begin{question} \label{question:algebraic_functions}
Given an algebraic function $\mathbf{z}$ on an irreducible variety $X$ what is the smallest number $k$ such that the function $\mathbf{z}$ can be rationally induced from an algebraic function $\mathbf{w}$ on some variety $Y$ of dimension $\leq k$?
\end{question} 

As most of the arguments for treating this question will be geometrical in nature, we would like to formulate this question in geometric terms. Instead of an algebraic function we will talk of a branched covering defined by it (see section \ref{section:algebraic_functions} below). Question \ref{question:algebraic_functions} can then be reformulated:

\begin{definition}
A branched covering $\xi_X$ over an irreducible variety $X$ is \textbf{rationally induced} from a branched covering $\xi_Y$ over a variety $Y$ if there exists a dominant rational morphism $f:X\to Y$ and a Zariski open subset $U$ of $X$ such that the restriction of the branched covering $\xi_X$ to $U$ is a covering and this covering is dominated by the restriction of the branched covering $f^*(\xi_Y)$ to $U$ (the mapping $f$ is assumed to be defined everywhere on $U$)
\end{definition}


\begin{question} \label{question:branched_coverings}
Let $\xi_X$ be a branched covering over an irreducible variety $X$. For what numbers $k$ there exists a branched covering $\xi_Y$ over an irreducible variety $Y$ of dimension $k$, such that the branched covering $\xi_X$ can be rationally induced from it?
\end{question}

The questions above can be formulated algebraically using the language of field extensions. This was done for instance in \cite{Buh1},\cite{Buh2}:

\begin{question} \label{question:fields} Let $E /K$ be a finite degree extension of fields and suppose $K$ is of finite transcendence degree over $\mathbf{C}$. For what numbers $k$ there exists a field extension $e/\mathbf{C}$ of transcendence degree $k$ so that $E\subset K(e)$?
\end{question}

In other words we are trying to get the extension $E/K$ by adjoining to the field of rationality $K$ ``irrationalities" (elements of $e$) depending on as few parameters as possible (the number of parameters being the transcendence degree of $e$ over $\mathbf{C}$). 

The minimal number of such parameters is called the \textbf{essential dimension} of the extension $E/K$.

Hilbert has formulated a version of Klein's resolvent problem as problem 13 in his famous list. While he hasn't specified an exact formulation of this problem one possible way to formulate his question is the following:

\begin{question} \label{question:hilbert}
Let $E/K$ be a finite field extension. What is the smallest number $k$ such that there exist a tower of field extensions $K=K_0\subset K_1\subset \ldots \subset K_n$ with the property that $E$ is contained in $K_n$ and each extension $K_i/K_{i-1}$ is of essential dimension at most $k$?
\end{question}

In the language of branched coverings it would amount to the following:

\begin{question} \label{question:hilbert_branched_coverings}
Let $\xi_X$ be a branched covering over an irreducible variety $X$. What is the smallest number $k$ for which one can find a Zariski open set $U\subset X$ and a tower of branched coverings $X_n\to X_{n-1}\to \ldots \to X_0=X$ such that the restriction of $\xi_X$ to $U$ is a covering which is dominated by the restiction of the composite branched covering $X_n\to X_0$ to $U$ and such that each branched covering $X_i\to X_{i-1}$ can be rationally induced from a branched covering over a space of dimension $\leq k$?
\end{question}

While we can't say anything intelligent about this question, we can prove some lower bound on the length of the tower for any fixed $k$. To formulate a precise result we will need the following definition:

\begin{definition}
A branched covering $\xi_X$ on a variety $X$ is said to be \textbf{dominated by a tower of extensions of dimensions $k_1,\ldots,k_n$} if there exists a tower of branched coverings $X_n\to X_{n-1}\to \ldots \to X_0=X$ such that $\xi_X$ is a subcovering of a covering dominated by the covering $X_n\to X$ over some Zariski open set $U\subset X$ and each covering $X_i\to X_{i-1}$ can be rationally induced from a space of dimension at most $m_i$.
\end{definition}

\subsection{Especially Interesting Cases}
\label{section:interesting_cases}

Due to its universal nature the case when $X=\mathbf{C}^n$ and $\mathbf{z}=\mathbf{z}(x_1,\ldots,x_n)$ is the universal algebraic function satisfying $\mathbf{z}^n+x_1\mathbf{z}^{n-1}+\ldots+x_n=0$ was especially interesting to classics. This case was considered by Kronecker and Klein (for example in \cite{Klein1} for $n=5$).

Classics were also interested in the special case when the algebraic function is as before, but the domain on which it is defined supports the square root of the discriminant as a rational function on it. Namely $$X=\{(x_1,\ldots,x_n,D)|d^2=\text{ discriminant of } \mathbf{z}^n+x_1\mathbf{z}^{n-1}+\ldots+x_n=0\}$$ In particular Kronecker showed that for $n=5$ this function can't be rationally induced from a space of dimension one.


\section{Algebraic Functions - Definition}
\label{section:algebraic_functions}

Since the notion of ``algebraic function" on a variety $X$ can be a little ambiguous, in this section we provide the  definitions that will make question \ref{question:algebraic_functions} and its relation to question \ref{question:branched_coverings} clearer.

\begin{definition}
An algebraic function $z$ on an irreducible variety $X$ is a choice of a branched covering $\tilde{X}\to X$ and a regular function $z:\tilde{X}\to \mathbf{C}$. The variety $\tilde{X}$ is called a domain of definition of $z$.

An algebraic function $z'$ with domain $\tilde{X}'$ is called a restriction of algebraic function $z$ with domain $\tilde{X}$ if there exist a branched covering $\tilde{X}'\to \tilde{X}$ making the following diagram commutative

$$\xymatrix{ \tilde{X}' \ar[d] \ar[dr]^{w} & \\
\tilde{X} \ar[r]^z \ar[d] & \mathbf{C} \\
X & 
}$$

Two algebraic functions are called equivalent if they are both restrictions of the same algebraic function.
\end{definition}

An algebraic function has in fact a natural domain. Namely to a function 
$$\xymatrix{\tilde{X}\ar[r]^z \ar[d]^{p} & \mathbf{C} \\ X & }$$ we associate an equivalent algebraic function with domain $\tilde{X}_z=\{(x,t)\in X\times \mathbf{C}|\exists \tilde{x}\in\tilde{X} \mbox{with } p(\tilde{x})=x, z(\tilde{x})=t\}$. We then define $z(x,t)=t$ and $p(x,t)=x$ for $(x,t)\in \tilde{X}_Z$. We also define a map from $\tilde{X}$ to $\tilde{X}_Z$ by sending $\tilde{x}\in\tilde{X}$ to $(p(\tilde{x}),z(\tilde{x}))$. With these definitions the following diagram becomes commutative

$$\xymatrix{
\tilde{X} \ar[ddr]_p \ar[drr]^z
\ar[dr] \\
& \tilde{X}_z \ar[d]^p \ar[r]_z
& \mathbf{C} \\
& X  &  }$$
showing that the function we started with is equivalent to the one we defined.

Given two algebraic functions $z_1,z_2$ with domains $\tilde{X}_1$ and $\tilde{X}_2$ respectively, one can find a common domain for them (i.e. find $\tilde{X}$, a map $\tilde{X}\to X$ and functions $z_1',z_2'$ on $\tilde{X}$ such that $z_i'$ is a restriction of $z_i$). Namely one can take $\tilde{X}=\tilde{X}_1\times_X\tilde{X}_2$ and $z_i'$ to be the pullback of $z_i$ to $\tilde{X}$ through the obvious maps from $\tilde{X}$ to $\tilde{X}_1$ and $\tilde{X}_2$.

With this construction one can define sums, products and quotients of algebraic functions (the quotient being defined only where the denominator doesn't vanish). Thus the notion of composition of a rational function and algebraic functions, needed for question \ref{question:algebraic_functions} is defined as well.

\begin{remark}
Since the domain of an algebraic function is not assumed to be irreducible, the algebraic function might have several independent branches.
\end{remark}
\begin{remark}
According to our definition the sum $\sqrt{x}+\sqrt{x}$ is defined as $z+w$ on the variety  $\{(x,z,w)\in\mathbf{C}^3|z^2=x,w^2=x\}$, i.e. it has two independent branches: $2\sqrt{x}$ and $0$.
\end{remark}

\subsection{From Algebra to Topology}

The following lemma allows us to use topological considerations to approach question \ref{question:algebraic_functions}:

\begin{lemma} \label{lemma:from_alg_fun_to_covering}
Suppose that an algebraic function $\mathbf{z}$ over a variety $X$ is rationally induced from an algebraic function $\mathbf{w}$ on a variety $Y$ of dimension $k$. Then there exists a Zariski open subset $U$ such that the covering associated to the restriction of the algebraic function $\mathbf{z}$ to $U$ can be induced from a topological space of dimension $\leq k$.
\end{lemma}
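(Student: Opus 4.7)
The plan is to construct an auxiliary unramified topological covering over a Zariski open subset of $Y$, pull it back to $U$, and show that the pullback topologically dominates the covering associated to $\mathbf{z}|_U$; corollary \ref{corollary:dominated} will then yield the conclusion.

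First I would choose the Zariski open set $U\subseteq X$ small enough that (i)~$r$ is regular on $U$ with image contained in a Zariski open $V\subseteq Y$ over which $\tilde{Y}_{\mathbf{w}}\to Y$ restricts to an unramified covering $\tilde{Y}_{\mathbf{w}}|_V\to V$, (ii)~the rational function $R$ is defined wherever needed, and (iii)~the natural domain $\tilde{X}_{\mathbf{z}}$ restricts to an unramified covering $\tilde{U}_{\mathbf{z}}\to U$. Each is a Zariski-open condition. I would then form the pullback $\tilde{W}:=r^*\bigl(\tilde{Y}_{\mathbf{w}}|_V\bigr)\to U$; it is an unramified covering, tautologically induced from the covering $\tilde{Y}_{\mathbf{w}}|_V\to V$ over the space $V$, which has complex dimension at most $k$.

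Next, consider the algebraic map $\pi:\tilde{W}\to U\times\mathbf{C}$ defined by $(x,s)\mapsto\bigl(x,R(x,s)\bigr)$. Its image is the restriction to $U$ of the natural domain $\tilde{X}_{R(x,\mathbf{w}(r(x)))}$ of the multivalued function $R(x,\mathbf{w}(r(x)))$. After a further Zariski shrinking of $U$ so that $\pi$ has locally constant degree, $\pi$ becomes a topological covering map onto its image, and so $\tilde{W}$ dominates $\tilde{U}_{R(x,\mathbf{w}(r(x)))}\to U$. Corollary \ref{corollary:dominated} then shows that $\tilde{U}_{R(x,\mathbf{w}(r(x)))}\to U$ is itself induced from a covering over a space of dimension at most $k$. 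By the hypothesis that $\mathbf{z}$ is a branch of $R(x,\mathbf{w}(r(x)))$, we have the inclusion $\tilde{U}_{\mathbf{z}}\subseteq\tilde{U}_{R(x,\mathbf{w}(r(x)))}$ as algebraic subvarieties of $U\times\mathbf{C}$; since both are unramified covers of $U$ of the same complex dimension, this inclusion is open and closed, realizing $\tilde{U}_{\mathbf{z}}$ as a union of connected components of $\tilde{U}_{R(x,\mathbf{w}(r(x)))}$, i.e.\ as a sub-covering.

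To finish I would invoke the following general fact: a sub-covering of a covering that is induced from a space of dimension $\leq k$ is itself induced from a space of dimension $\leq k$. This is a quick consequence of lemma \ref{lemma:dim}: use that lemma to factor the inducing map as a map inducing a surjection on fundamental groups followed by a covering map onto the original base (the intermediate base still having dimension $\leq k$); under $\pi_1$-surjectivity, a $\pi_1$-invariant subset of the fiber of the pullback is automatically $\pi_1$-invariant with respect to the base, so a sub-covering of the pullback is the pullback of a sub-covering of the intermediate base. Applying this with our inclusion $\tilde{U}_{\mathbf{z}}\subseteq\tilde{U}_{R(x,\mathbf{w}(r(x)))}$ concludes the proof. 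The main technical obstacle I anticipate is the second step: guaranteeing, after all the Zariski shrinkings are performed simultaneously on a still non-empty $U$, that $\pi$ and the projections to $U$ are genuine topological covering maps (of locally constant degree) rather than merely generically finite algebraic maps, so that the topological notions of domination and sub-covering are applicable.
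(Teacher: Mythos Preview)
Your approach is essentially the paper's own: pull back the covering of $\mathbf{w}$ via $r$, observe that the covering of $R(x,\mathbf{w}(r(x)))$ is dominated by this pullback, apply corollary~\ref{corollary:dominated}, and then pass to the sub-covering defined by the branch $\mathbf{z}$. You are in fact more careful than the paper in two places: you make the domination map $\pi:(x,s)\mapsto(x,R(x,s))$ explicit, and you actually prove the ``sub-covering of something induced from dimension $\leq k$ is itself induced from dimension $\leq k$'' step via lemma~\ref{lemma:dim}, whereas the paper simply asserts it.

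There is one small omission. You write that $\tilde{W}$ is ``tautologically induced from the covering $\tilde{Y}_{\mathbf{w}}|_V\to V$ over the space $V$, which has complex dimension at most $k$.'' But the lemma asks for induction from a \emph{topological space of dimension $\leq k$} in the sense of section~\ref{section:notations} (homotopy equivalent to a CW-complex of dimension $\leq k$), and a variety of complex dimension $k$ has real dimension $2k$. The paper closes this gap by shrinking $Y$ to an affine open, then invoking the fact (stated in section~\ref{section:notations}) that an affine variety of complex dimension $k$ is Stein and hence homotopy equivalent to a $k$-dimensional CW-complex. You should add ``choose $V$ affine'' to your list of Zariski shrinkings and cite this fact; otherwise the argument is complete.
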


\begin{proof}
According to definition \ref{definition:rationally_induced} there exist a dominant rational morphism $r$ from $X$ to $Y$, a rational function $R$ on $X\times\mathbf{C}$ and a Zariski open set $U$ of $X$ such that $\mathbf{z}(x)$ is a branch of the function $R(x,\mathbf{w}(r(x)))$ for $x\in U$.

By replacing $Y$ by its Zariski open subset and shrinking $U$ if necessary, we can assume that $Y$ is affine. By shrinking $U$ further we can also assume that the covering associated to the algebraic function $x\to R(x,\mathbf{w}(r(x)))$ is unramified over $U$.

Since $Y$ is affine variety, it is Stein and hence is homotopically equivalent to a topological space of dimension $\leq k$.  In particular the covering associated to the algebraic function $x\to \mathbf{w}(r(x))$ over $U$ can be induced from a space of dimension $\leq k$. Since the covering associated to $x\to R(x,\mathbf{w}(r(x)))$ is dominated by it, corollary \ref{corollary:dominated} implies that is also can be induced from a space of dimension $\leq k$. Finally, because the function $\mathbf{z}(x)$ is a branch of $x\to R(x,\mathbf{w}(r(x)))$, the covering associated to it can also be induced from a space of dimension $\leq k$. 
\end{proof}

In a similar fashion we can prove the following:

\begin{lemma} \label{lemma:from_alg_fun_to_covering_comp}
Suppose that an algebraic function $\mathbf{z}$ over a variety $X$ is dominated by a tower of extensions of dimensions $k_1,\ldots,k_n$. Then there exists a Zariski open subset $U$ such that the covering associated to the restriction of the algebraic function $z$ to $U$ is dominated by a covering that is a composition of coverings $U_n\to U_{n-1}\to \ldots \to U_0=U$ such that each $U_i\to U_{i-1}$ can be induced from a space of dimension at most $k_i$.
\end{lemma}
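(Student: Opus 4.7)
The plan is to iterate the argument of Lemma \ref{lemma:from_alg_fun_to_covering} through each level of the tower and then convert the ``subcovering'' at the top into an honest domination.

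First, I would unpack the hypothesis. By definition, there is a tower of branched coverings $X_n\to X_{n-1}\to\ldots\to X_0=X$ and, for each $i$, a dominant rational morphism $r_i:X_{i-1}\dashrightarrow Y_i$ with $\dim Y_i\leq k_i$ together with a branched covering $\eta_i$ on $Y_i$ such that, over a Zariski open subset of $X_{i-1}$, the covering $X_i\to X_{i-1}$ is unbranched and is dominated by $r_i^{\ast}\eta_i$. Moreover, there is a Zariski open $V\subset X$ over which the covering associated to $\mathbf{z}|_V$ is a subcovering of an unbranched covering $\eta$ on $V$, and $\eta$ is dominated by the restriction of the composite covering $X_n\to X$ to $\pi_n^{-1}(V)$, where $\pi_j:X_j\to X$ denotes the composite projection.

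Next, I would shrink $V$ to a Zariski open $U\subseteq V$ by removing from $X$ the (finitely many) images under the finite maps $\pi_{i-1}$ of the proper closed subsets that cause trouble at each level: the indeterminacy locus of each $r_i$, the branch locus of each $X_i\to X_{i-1}$, the preimage under $r_i$ of the branch locus of $\eta_i$, and the complement in $X_{i-1}$ of the Zariski open witnessing the rational induction of $X_i\to X_{i-1}$. Setting $U_i=\pi_i^{-1}(U)\subseteq X_i$, this guarantees that each $U_i\to U_{i-1}$ is an honest unbranched covering, that each $r_i$ is regular on $U_{i-1}$ and maps into a Zariski open $Y_i'\subseteq Y_i$ over which $\eta_i$ is unbranched, and that $U_i\to U_{i-1}$ is dominated by $r_i^{\ast}(\eta_i|_{Y_i'})$. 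Since $Y_i'$ is affine of dimension $\leq k_i$, it is Stein and hence homotopy equivalent to a CW complex of dimension $\leq k_i$; therefore $r_i^{\ast}(\eta_i|_{Y_i'})$ is induced from a space of dimension $\leq k_i$ via the map $r_i:U_{i-1}\to Y_i'$. By Corollary \ref{corollary:dominated}, the same is true of $U_i\to U_{i-1}$.

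Finally, I would convert the top-level ``subcovering'' data into a domination. The total space of $\xi_z|_U$ embeds as a union of connected components into $\widetilde{\eta}$; let $U_n'\subset U_n$ be its preimage under the covering map $U_n\to\widetilde{\eta}$ furnished by the assumed domination. Then $U_n'$ is a union of connected components of $U_n$, so the restriction $U_n'\to U_{n-1}$ is still a covering, and the composite $U_n'\to U$ dominates $\xi_z|_U$. The tower $U_n'\to U_{n-1}\to\ldots\to U_0=U$ has its lower steps ($i<n$) unchanged from the previous paragraph. The main subtlety, which I expect to be the principal obstacle, is to check that the top step $U_n'\to U_{n-1}$ still has the induced-from-dim-$k_n$ property. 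For this, I would apply Corollary \ref{corollary:induce} to arrange that the map $g:U_{n-1}\to Z$ (with $\dim Z\leq k_n$) witnessing that $U_n\to U_{n-1}$ is induced from a covering $\eta_Z$ on $Z$ is surjective on fundamental groups. Under such a $g$, connected components of $U_n=g^{\ast}\eta_Z$ correspond bijectively, via $g$, to connected components of $\eta_Z$, so $U_n'$ is the pullback through $g$ of the matching union of components of $\eta_Z$. Hence $U_n'\to U_{n-1}$ is also induced from the dimension-$\leq k_n$ space $Z$, completing the construction of the desired tower.
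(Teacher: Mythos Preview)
Your proof is correct and follows exactly the approach the paper indicates: the paper gives no explicit argument beyond the sentence ``In a similar fashion we can prove the following,'' so iterating the proof of Lemma~\ref{lemma:from_alg_fun_to_covering} level by level, as you do, is precisely what is intended. Your extra care at the top of the tower---replacing $U_n$ by the union of components $U_n'$ and then using Corollary~\ref{corollary:induce} to ensure $U_n'\to U_{n-1}$ is still induced from a space of dimension $\leq k_n$---handles the ``subcovering'' clause in the definition that the paper leaves implicit.
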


\section{Algebraic functions on the algebraic torus}
\label{section:algebraic_tori}

In this section we will answer completely questions \ref{question:algebraic_functions} for algebraic functions unramified on $(\mathbf{C}\smallsetminus \{0\})^n$. Before we do so, we show by example that the problem is not completely trivial.

\begin{example}
Let $\mathbf{z}(x,y)=\sqrt{x}+\sqrt[3]{y}$. We claim that it is induced from an algebraic function of one variable. Namely, one can verify that $$\sqrt{x}+\sqrt[3]{y}=\frac{y}{x}\left(\left(\sqrt[6]{\frac{x^3}{y^2}}\right)^2+\left(\sqrt[6]{\frac{x^3}{y^2}}\right)^3\right)$$ so if we let $R(x,y,w)=\frac{y}{x}(w^2+w^3)$, $\mathbf{w}(r)=\sqrt[6]{r}$, $r(x,y)=\frac{x^3}{y^2}$ then $\mathbf{z}(x,y)=R(x,y,\mathbf{w}(r(x,y)))$ for $x,y\neq0$.

On the other hand a similarly looking function $\sqrt{x}+\sqrt{y}$ can't be induced from an algebraic function of one variable, as theorem \ref{theorem:functions_on_torus} below shows.
\end{example}

Now we state the main result of this section. In what follows $\mathbf{T}$ stands for $\mathbf{C}\smallsetminus\{0\}$.

\begin{theorem}
\label{theorem:functions_on_torus}
Let $\mathbf{z}$ be an algebraic function on the torus $\mathbf{T}^n$ unramified over $\mathbf{T}^n$. Let $k$ denote the rank of its monodromy group. Then $\mathbf{z}$ can be induced from an algebraic function on $\mathbf{T}^k$ and it cannot be rationally induced from an algebraic function on a variety of dimension $<k$.

Moreover, it is dominated by a tower of extensions of dimensions $k_1,\ldots,k_s$ if and only if $k_1+\ldots+k_s\geq k$.
\end{theorem}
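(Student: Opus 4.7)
The overall strategy is to combine the topological results from the previous section (Theorems \ref{theorem:mainresult} and \ref{theorem:mainresult_comp}) with a geometric construction that embeds topological tori inside the algebraic torus while avoiding any prescribed hypersurface. The key point is that a real torus $T_\rho=\{x\in\mathbf{T}^n:|x_i|=\rho_i\}$ is a deformation retract of $\mathbf{T}^n$, so the inclusion induces an isomorphism on fundamental groups and the restriction of the covering to $T_\rho$ has the same monodromy group of rank $k$.

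For the lower bound, suppose $\mathbf{z}$ is rationally induced from a function on a variety of dimension $\ell$. Lemma \ref{lemma:from_alg_fun_to_covering} produces a Zariski open $U\subset\mathbf{T}^n$ such that the covering associated to $\mathbf{z}|_U$ is topologically induced from a space of dimension $\le\ell$. I would then argue that $\mathbf{T}^n\setminus U$ is contained in the zero set of some nonzero Laurent polynomial $f$, pick a vertex $\alpha_0$ of the Newton polytope of $f$, choose a linear functional $v$ strictly maximizing $\alpha\mapsto v\cdot\alpha$ at $\alpha_0$, and set $\rho_i=e^{N v_i}$. For $N$ large enough the monomial $c_{\alpha_0}x^{\alpha_0}$ dominates all other terms of $f$ on $T_\rho$, hence $f$ is nowhere zero on $T_\rho$ and $T_\rho\subset U$. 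Restricting the covering to $T_\rho$ still allows inducing from a space of dimension $\le\ell$, but now Theorem \ref{theorem:mainresult} forces $\ell\ge k$. For the upper bound, note that since $\mathbf{z}$ is unramified on $\mathbf{T}^n$ the kernel of the monodromy $\mathbb{Z}^n=\pi_1(\mathbf{T}^n)\to G$ is a full-rank sublattice; applying Smith normal form algebraically gives a basis $e_1,\dots,e_n$ of $\mathbb{Z}^n$ and integers $m_1\mid\dots\mid m_n$ with $m_i=1$ for $i\le n-k$. Reinterpreting this basis change as a monomial automorphism of $\mathbf{T}^n$, the monodromy factors through the projection $\pi:\mathbf{T}^n\to\mathbf{T}^k$ on the last $k$ coordinates, via the monomial cover $(y_1,\dots,y_k)\mapsto(y_1^{m_{n-k+1}},\dots,y_k^{m_n})$ of $\mathbf{T}^k$; this monomial cover is the Galois closure of the required $\mathbf{w}$, and $\mathbf{z}$ is expressible as a rational function of $x$ and (branches of) $\mathbf{w}(\pi(x))$.

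For the \textbf{moreover} direction, the "if" direction is trivial once the first part is proved, since a tower with $\sum k_i\ge k$ can always be padded out from the single-step rational induction from $\mathbf{T}^k$ established above (or taken to be that induction itself when $s=1,k_1\ge k$). For the "only if" direction I would replace Lemma \ref{lemma:from_alg_fun_to_covering} by Lemma \ref{lemma:from_alg_fun_to_covering_comp} to obtain a tower of covers over a Zariski open $U$, then apply the Newton polytope argument to the product of all the defining Laurent polynomials of the ramification loci to find a single $T_\rho\subset U$ in the complement of all of them. The restriction of the whole tower to $T_\rho$ is then a tower of topological coverings over a torus with $i$-th step inducible from dimension $\le k_i$, so Theorem \ref{theorem:mainresult_comp} yields $\sum k_i\ge k$.

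The main obstacle is the lower bound, and specifically the Newton polytope construction of a real torus $T_\rho$ avoiding a prescribed algebraic hypersurface — this is the bridge between the purely topological results of the previous sections and the algebro-geometric setting. Everything else is formal: the upper bound is a direct translation of the Smith normal form argument from Lemma \ref{lemma:normal_form} into algebraic language (using that abelian étale covers of $\mathbf{T}^n$ are monomial covers), and the tower version reduces to applying Theorem \ref{theorem:mainresult_comp} after the same torus-avoiding-hypersurface trick, this time applied simultaneously to the finitely many hypersurfaces coming from the tower.
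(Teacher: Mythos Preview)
Your proof follows the paper's almost exactly: Smith normal form plus the primitive element theorem for the upper bound; Lemma~\ref{lemma:from_alg_fun_to_covering}, an embedded real torus, and Theorem~\ref{theorem:mainresult} for the lower bound; and the analogous combination of Lemma~\ref{lemma:from_alg_fun_to_covering_comp} with Theorem~\ref{theorem:mainresult_comp} for the tower statement.

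The one noteworthy difference is how the real torus $T_\rho\subset U$ is produced. You give an explicit Newton-polytope argument (pick a vertex of the Newton polytope of a defining Laurent polynomial, push $\log\rho$ along a direction exposing that vertex so that the vertex monomial dominates on $T_\rho$), whereas the paper simply cites \cite{Maz} for the statement that for all sufficiently small $\epsilon_1,\ldots,\epsilon_n$ the torus $|x_i|=\epsilon_i$ lies inside $U$. Your version is more self-contained and makes the mechanism visible; the underlying idea is the same.

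One point to tighten: your ``if'' direction for the tower (``padded out from the single-step rational induction from $\mathbf{T}^k$'') does not work as written. A single step of dimension $k$ followed by trivial steps realises the profile $(k,0,\ldots,0)$, which is \emph{not} a tower of dimensions $k_1,\ldots,k_s$ when every $k_i<k$. The paper handles this by actually splitting the extension: it adjoins the roots $x_1^{1/m_1},\ldots,x_k^{1/m_k}$ in $s$ successive groups of sizes $k_1,k_2,\ldots$, and observes (via the already-proved first part, applied to each group) that the $j$-th step is rationally inducible from $\mathbf{T}^{k_j}$.
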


\begin{proof}
We first show that $\mathbf{z}$ can be rationally induced from an algebraic function on $\mathbf{T}^k$.

A choice of coordinates $x_1,\ldots,x_n$ on $\mathbf{T}^n$ gives rise to a choice of generators $\gamma_1,\ldots,\gamma_n$ of $\pi_1(\mathbf{T}^n)$, because $\mathbf{T}^n$ retracts to the torus $|x_1|=1,\ldots,|x_n|=1$ and the corresponding $\gamma_i$ is the loop in this torus for which all $x_j$ are constant for $j\neq i$ (with $dx_i/x_i$ defining the positive orientation on it). A toric change of coordinates in $\mathbf{T}^n$ gives rise to a linear change of generators in $\pi_1(\mathbf{T}^n)$.

Let $A$ denote the subgroup of loops in $\pi_1(\mathbf{T}^n)$ that leave all the branches of $\mathbf{z}$ invariant under the monodromy action.

Choose coordinates $x_1,\ldots,x_n$ in $\mathbf{T}^n$ so that $A=\langle \gamma_1^{m_1},\ldots,\gamma_n^{m_n}\rangle$ with $m_n|m_{n-1}|\ldots|m_1$ (this is possible because of Smith normal form theorem mentioned in the proof of lemma \ref{lemma:normal_form}). Since $k$ is the rank of the monodromy group $\pi_1(\mathbf{T}^n)/A$, we have $m_{k+1}=1,\ldots,m_n=1$.

The function $\psi(x_1,\ldots,x_n)=\mathbf{z}(x_1^{m_1},\ldots,x_n^{m_n})$ is invariant under monodromy action, hence is rational. Hence $$z(x_1,\ldots,x_n)=\psi(x_1^{1/m_1},\ldots,x_k^{1/m_k},x_{k+1},\ldots,x_{n})$$ where $\psi$ is rational.

Principal element theorem implies that the field extension $$\mathbf{C}(x_1^{1/m_1},\ldots,x_k^{1/m_k})/\mathbf{C}(x_1,\ldots,x_k) $$ is generated by one element, say the algebraic function $\mathbf{w}(x_1,\ldots,x_k)$. But then each $x_i^{1/m_i}$ is a rational function of $\mathbf{w}$: $x_i^{1/m_i}=r_i(x,\mathbf{w}(x))$, where $x$ stands for $(x_1,\ldots,x_k)$. 

Hence the function $$\mathbf{z}(x_1,\ldots,x_n)=\psi(r_1(\mathbf{w}(x),x),\ldots,r_k(\mathbf{w}(x),x),x_{k+1},\ldots,x_n)$$ where $x=(x_1,\ldots,x_k)$ is rationally induced from the function $\mathbf{w}$ on $\mathbf{T}^k$.

Moreover, if $k_1+\ldots+k_s\geq k$ then the function $\mathbf{z}$ lies in the extension of the field of rational functions on $\mathbf{T}^n$ by first adding to it the first $k_1$ functions $x_i^{m_i}$, then the next $k_2$ and so on. By what we have already showed $j$-th step can be accomplished by adding one algebraic function that can be rationally induced from a space of dimension $k_j$. This shows that $\mathbf{z}$ is dominated by a tower of extensions of dimensions $k_1,\ldots,k_s$.

Now suppose that the function $\mathbf{z}$ is rationally induced from an algebraic function over a variety $Y$ of dimension smaller than $k$. 

Lemma \ref{lemma:from_alg_fun_to_covering} then implies that there exists a Zariski open subset $U$ of $\mathbf{T}^n$ over which the covering associated to the algebraic function $\mathbf{z}$ can be induced from a topological space of dimension $< k$.

It follows from the results in \cite{Maz} that for sufficiently small $\epsilon_1,\ldots,\epsilon_n$ the torus $|x_1|=\epsilon_1,\ldots,|x_n|=\epsilon_n$ lies entirely inside $U$.

The space $\mathbf{T}^n$ can be retracted onto this torus. Hence the monodromy group of the restriction of the covering associated to $\mathbf{z}$ to this torus coincides with the full monodromy group of $\mathbf{z}$ over $\mathbf{T}^n$ and thus its rank is $k$ as well. But then theorem \ref{theorem:mainresult} tells that this covering can't be induced from a covering over a space of dimension $<k$. This however contradicts lemma \ref{lemma:from_alg_fun_to_covering}.

Similar proof shows that if the function $\mathbf{z}$ is dominated by a tower of extensions of dimensions $k_1,\ldots,k_s$ then $k\leq k_1+\ldots+k_s$, except instead of lemma \ref{lemma:from_alg_fun_to_covering} we use lemma \ref{lemma:from_alg_fun_to_covering_comp} and instead of the topological result \ref{theorem:mainresult} we use the theorem \ref{theorem:mainresult_comp}.
\end{proof}

We can use this theorem to prove some bounds for the questions in section \ref{section:interesting_cases}:

\begin{theorem}
\label{theorem:universal_function}
The universal algebraic function $\mathbf{z}(x_1,x_2,\ldots,x_n)$, i.e. the function satisfying $\mathbf{z}^n+x_1\mathbf{z}^{n-1}+\ldots+x_n=0$ can't be rationally induced from an algebraic function over a space of dimension $<\lfloor n/2\rfloor$.
\end{theorem}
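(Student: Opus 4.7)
The plan is to derive a contradiction with the topological Theorem \ref{theorem:mainresult} by exhibiting, inside the unramified locus of $\mathbf{z}$, a real topological torus $T\cong (S^1)^m$ (with $m=\lfloor n/2\rfloor$) on which the restriction of the covering associated to $\mathbf{z}$ has monodromy of rank exactly $m$ and which lies inside the Zariski open subset supplied by Lemma \ref{lemma:from_alg_fun_to_covering}.

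To construct $T$, I would pick a point $x^*\in\mathbf{C}^n$ corresponding to the polynomial
$$P^*(z)=\prod_{i=1}^{m}(z-a_i)^2\cdot\bigl[(z-a_0)\bigr],$$
with distinct $a_0,a_1,\ldots,a_m\in\mathbf{C}$; the bracketed factor is present only when $n=2m+1$. A standard versal deformation argument produces analytic local coordinates $(a_0,a_1,\ldots,a_m,t_1,\ldots,t_m)$ near $x^*$ (omit $a_0$ when $n=2m$) in which the universal polynomial factors as $\prod_{i=1}^{m}\bigl((z-a_i)^2-t_i\bigr)\cdot[(z-a_0)]$; in these coordinates the discriminant hypersurface $D$ is locally cut out by $\prod_i t_i=0$. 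Set $T_\epsilon=\{a=a^*,\ |t_i|=\epsilon\}$: for any small $\epsilon>0$ this real $m$-torus lies in $\mathbf{C}^n\setminus D$. Its fundamental group $\pi_1(T_\epsilon)=\mathbf{Z}^m$ is generated by the loops $\gamma_i$ that wind $t_i$ once around $0$, and the monodromy along $\gamma_i$ transposes the two nearby roots $a_i+\sqrt{t_i}$ and $a_i-\sqrt{t_i}$ while fixing all the others. Since the $a_i$'s are distinct, these $m$ transpositions are pairwise disjoint, so the monodromy group on $T_\epsilon$ is $(\mathbf{Z}/2)^m$, of rank exactly $m$.

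Assume now, for contradiction, that $\mathbf{z}$ is rationally induced from an algebraic function on a variety of dimension $k<m$. Lemma \ref{lemma:from_alg_fun_to_covering} produces a Zariski open $U\subseteq\mathbf{C}^n$ over which the covering associated to $\mathbf{z}|_U$ is topologically induced from a space of dimension $k$. If one can arrange $T_\epsilon\subseteq U$, the restriction of this covering to $T_\epsilon$ is also topologically induced from a space of dimension $k<m$, which contradicts Theorem \ref{theorem:mainresult} since the monodromy of the restricted covering has rank $m$.

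The main obstacle is precisely the containment $T_\epsilon\subseteq U$, because $\mathbf{C}^n\setminus U$ may have irreducible components beyond $D$ that pass through or near $x^*$. To overcome this I would invoke the same Parshin-neighborhood input from \cite{Maz} that is used in the proof of Theorem \ref{theorem:functions_on_torus}: by choosing $x^*$ generically in the codimension-$m$ stratum of polynomials with exactly $m$ distinct double roots, and by letting the radii $|t_i|=\epsilon_i$ decay sufficiently rapidly in a prescribed hierarchical order, the resulting small Parshin torus avoids any prescribed proper closed subvariety of $\mathbf{C}^n$ and in particular lies inside $U$. With this input the contradiction is complete and the theorem follows.
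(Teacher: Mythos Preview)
Your argument is correct, but it is not the route the paper takes for its primary proof. The paper's proof is global and algebraic: it considers the dominant polynomial map
\[
(a_1,\ldots,a_k,s_1,\ldots,s_k)\longmapsto (x_1,\ldots,x_n),\qquad \prod_{i=1}^{k}\bigl((\mathbf w-a_i)^2-s_i\bigr)=\mathbf w^n+x_1\mathbf w^{n-1}+\cdots+x_n,
\]
observes that it is onto, and concludes that if $\mathbf z$ were rationally induced from dimension $<k$ then so would be its pullback. But the pullback is an algebraic function unramified on the algebraic torus $(\mathbf C^*)^{n}$ with monodromy $(\mathbf Z/2)^k$, and Theorem~\ref{theorem:functions_on_torus} disposes of it directly. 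No Parshin-point argument is needed at this stage (the small-torus avoidance result from \cite{Maz} is already absorbed into the proof of Theorem~\ref{theorem:functions_on_torus}).

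Your approach is instead the local one: you stay in the original $\mathbf C^n$, pick a point on the stratum of polynomials with $m$ distinct double roots, and build a small real torus near it whose monodromy is $(\mathbf Z/2)^m$, then invoke Lemma~\ref{lemma:from_alg_fun_to_covering} and Theorem~\ref{theorem:mainresult}. This is exactly the strategy the paper develops afterwards in Section~\ref{section:local_version} (Theorem~\ref{theorem:local_obstruction}) and uses to \emph{reprove} Theorem~\ref{theorem:universal_function}; the flag it exhibits there, namely the image of $Z(z_1=z_2)\supset Z(z_1=z_2,z_3=z_4)\supset\cdots$, is precisely the Parshin point underlying your hierarchically shrinking torus. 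The advantage of the paper's first proof is that the dominance of the parametrizing map makes the reduction to Theorem~\ref{theorem:functions_on_torus} a one-liner; the advantage of your (and the paper's second) approach is that it applies verbatim to any algebraic function whose base meets the relevant stratum suitably, which is what drives Section~\ref{section:generic}.
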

\begin{proof}
Suppose that $n=2k$ is even. 





Consider the mapping that sends $(a_1,\ldots,a_k,s_1,\ldots,s_k)\in \mathbf{C}^{2k}$ to the coefficients $(x_1,\ldots,x_n)$ satisfying $$\prod_{i=1}^k (\mathbf{w}-(a_i+\sqrt{s_i}))(\mathbf{w}-(a_i-\sqrt{s_i}))=\mathbf{w}^n+x_1 \mathbf{w}^{n-1}+\ldots+x_n$$ It is easy to check that this mapping is onto, hence if the function $\mathbf{z}$ can be rationally induced from an algebraic function over a space of dimension at most $k$, then the pullback of $\mathbf{z}$ through this mapping also can.

Notice however that the pullback of $\mathbf{z}$ through this mapping is the function $\mathbf{w}=\mathbf{w}(a_1,\ldots,a_k,s_1,\ldots,s_k)$ satisfying $$\prod_{i=1}^k \left(\mathbf{w}-(a_i+\sqrt{s_i})\right)\left(\mathbf{w}-(a_i-\sqrt{s_i})\right)=0$$ This function is an algebraic function unramified over the algebraic torus with coordinates $a_1,\ldots,a_k,s_1,\ldots,s_k$ and its monodromy group is isomorphic to $\mathbf{Z}_2^k$, i.e. has rank $k$. This contradicts theorem \ref{theorem:functions_on_torus}.

If $n=2k+1$ is odd, we can apply the same argument to the function $\mathbf{w}$ satisfying $$\left(\prod_{i=1}^k \left(\mathbf{w}-(a_i+\sqrt{s_i})\right)\left(\mathbf{w}-(a_i-\sqrt{s_i})\right)\right)(\mathbf{w}-a_{k+1})=0$$
\end{proof}

Similar technique can be applied to analyze what happens if the square root of discriminant is adjoined to the domain of rationality. Namely we can prove the following theorem:

\begin{theorem}
\label{theorem:universal_function_with_discriminant}
The algebraic function $\mathbf{z}(x_1,x_2,\ldots,x_n)$ the function satisfying $z^n+x_1z^{n-1}+\ldots+x_n=0$ considered as a function on the variety $$\{(x_1,\ldots,x_n,d)\in\mathbf{C}^n\times\mathbf{C}|d^2=\text{discriminant of }z^n+x_1z^{n-1}+\ldots+x_n=0\}$$ can't be rationally induced from an algebraic function over a space of dimension $<2\lfloor n/4\rfloor$.
\end{theorem}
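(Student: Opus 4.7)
The plan is to mimic the proof of Theorem~\ref{theorem:universal_function}, but replacing each transposition pair $\{a_i\pm\sqrt{s_i}\}$ by a \emph{Klein quadruple} $\{a_i+\epsilon_1\sqrt{s_i}+\epsilon_2\sqrt{t_i}\,:\,\epsilon_1,\epsilon_2\in\{\pm1\}\}$, so that all the monodromy permutations are even. This will let the parametrization lift from $\mathbf C^n$ to the discriminant variety
$V=\{(x_1,\ldots,x_n,d)\,:\,d^2=\mathrm{disc}(z^n+x_1z^{n-1}+\ldots+x_n)\}$,
while producing a pullback whose monodromy group has the maximal possible rank compatible with living in the alternating group.

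Write $n=4m+r$ with $m=\lfloor n/4\rfloor$ and $r\in\{0,1,2,3\}$, and define
\[
\phi:\mathbf C^{3m+r}\longrightarrow V,\qquad(a_1,s_1,t_1,\ldots,a_m,s_m,t_m,b_1,\ldots,b_r)\longmapsto(x_1,\ldots,x_n,d),
\]
where $x_1,\ldots,x_n$ are the coefficients of
\[
P(w)=\prod_{i=1}^{m}\Bigl(\bigl((w-a_i)^2-s_i-t_i\bigr)^2-4s_it_i\Bigr)\prod_{j=1}^{r}(w-b_j)
\]
(whose $n$ roots are $\{a_i+\epsilon_1\sqrt{s_i}+\epsilon_2\sqrt{t_i}\}\cup\{b_j\}$) and $d=\prod_{k<\ell}(r_k-r_\ell)$. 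Flipping the sign of $\sqrt{s_i}$ acts on the four roots of the $i$-th Klein factor by a double transposition, and likewise for $\sqrt{t_i}$; together these two generators produce the Klein four-group $V_4\subset A_4$. Hence the Galois group of $P$ over $\mathbf C(a,s,t,b)$ sits inside $V_4^m\subset A_n$. In particular $d$ is fixed by the full Galois group, so it lies in $\mathbf C(a,s,t,b)$, and being integral over the UFD $\mathbf C[a,s,t,b]$ it is actually a polynomial. Thus $\phi$ is a regular morphism into $V$.

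The pullback $\phi^\ast\mathbf{z}$ is the algebraic function with branches $\{a_i+\epsilon_1\sqrt{s_i}+\epsilon_2\sqrt{t_i}\}\cup\{b_j\}$; its Galois covering is obtained from $\mathbf C(a,s,t,b)$ by adjoining $\sqrt{s_i}$ and $\sqrt{t_i}$, so it is a $2^{2m}$-sheeted Galois covering of the algebraic torus $\mathbf T^{3m+r}$, unramified over the entire torus, with monodromy group $V_4^m\cong\mathbf Z_2^{2m}$ of rank $2m$. Theorem~\ref{theorem:functions_on_torus} therefore forbids $\phi^\ast\mathbf{z}$ from being rationally induced from any variety of dimension less than $2m$. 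Since any rational induction of $\mathbf{z}$ on $V$ from a lower-dimensional $Y$ would, by composition with $\phi$, transfer to a rational induction of $\phi^\ast\mathbf{z}$ from $Y$, the same lower bound $2m=2\lfloor n/4\rfloor$ applies to $\mathbf{z}$.

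The step I expect to require the most care is verifying that $\phi$ actually lands in $V$---equivalently, that $d=\prod_{k<\ell}(r_k-r_\ell)$ is a genuine polynomial in the parameters rather than only a rational or algebraic function of them. This is the structural point where Klein quadruples replace transposition pairs: the inclusion $V_4^m\subset A_n$ forces $d$ to be Galois-invariant and hence rational in $(a,s,t,b)$, and integrality of the $r_k$ then promotes it to a polynomial. Apart from this, the argument is a routine adaptation of the proof of Theorem~\ref{theorem:universal_function}.
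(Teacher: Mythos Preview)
Your quadruple construction and the verification that $d$ is a polynomial are fine, and the monodromy computation is correct: the pullback $\phi^*\mathbf z$ is indeed unramified on the torus with monodromy group $\mathbf Z_2^{2m}$. The gap is in the final step, where you say that a rational induction of $\mathbf z$ on $V$ ``would, by composition with $\phi$, transfer to a rational induction of $\phi^*\mathbf z$''. This only works if $\phi$ is \emph{dominant}. Your source has dimension $3m+r$, while $V$ has dimension $n=4m+r$, so the image of $\phi$ lies in a proper closed subvariety $W\subsetneq V$. A rational induction of $\mathbf z$ from some $Y$ provides a rational map $r:V\dashrightarrow Y$ and a Zariski open set $U\subset V$ on which everything is defined; but nothing prevents $W$ from lying entirely inside $V\setminus U$ (or inside the indeterminacy locus of $r$), in which case $r\circ\phi$ is nowhere defined and the transfer fails. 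This is exactly the difficulty the paper stresses in the introduction: rational maps let you throw away an arbitrary hypersurface, so any auxiliary family you use must survive that.

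Compare with the paper's proof: it uses the four-term expressions $a_i\pm\sqrt{s_i}\pm\sqrt{t_i}\pm b_i\sqrt{s_it_i}$, with the extra parameter $b_i$ inserted precisely so that the source has dimension $4m=n$ and the map to $V$ is dominant (the paper checks this explicitly by solving for $a_i,b_i,s_i,t_i$ in terms of generic roots $z_1,\ldots,z_{4m}$). The same point already appears in the proof of Theorem~\ref{theorem:universal_function}, where the map $(a_1,\ldots,a_k,s_1,\ldots,s_k)\mapsto(x_1,\ldots,x_n)$ has equidimensional source and target and is noted to be onto. So the fix is not to drop the $b_i$'s: keep your Klein-four structure for the monodromy, but add one more parameter per block to restore dominance, and then verify (as the paper does) that a generic point of $V$ is hit.
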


\begin{proof}
Suppose that $n=4k$ is divisible by four.

Let $w_{\cdot}$ denote the expressions
\begin{align*}
 w_{4i}\hspace{1pc} &= a_i+\sqrt{s_i}+\sqrt{t_i}+b_i\sqrt{s_i t_i} \\
 w_{4i+1} &= a_i-\sqrt{s_i}+\sqrt{t_i}-b_i\sqrt{s_i t_i} \\
 w_{4i+2} &= a_i+\sqrt{s_i}-\sqrt{t_i}-b_i\sqrt{s_i t_i} \\
 w_{4i+3} &= a_i-\sqrt{s_i}-\sqrt{t_i}+b_i\sqrt{s_i t_i}
\end{align*}
 and let the function $\mathbf{w}=\mathbf{w}(a_1,\ldots,a_m,b_1,\ldots,b_m,s_1,\ldots,s_m,t_1,\ldots,t_m)$ satisfy $\prod_{i=1}^{4m}{(w-w_i)}=0$. The monodromy of this algebraic function is realized by even permutations only, hence its discriminant is a square of some rational function in the variables $a,b,s,t$. Hence this algebraic function can be induced from the function $\mathbf{z}$ in the formulation of the theorem by means of a map that sends the point  $(a_1,\ldots,a_m,b_1,\ldots,b_m,s_1,\ldots,s_m,t_1,\ldots,t_m)$ to the point $(x_1,\ldots,x_n,d)$ where $x_1,\ldots,x_n$ are the coefficients of the expanded version of the equation $\prod_{i=1}^{4m}{(z-w_i)}=z^n+x_1z^{n-1}+\ldots+x_n$ that $\mathbf{w}$ satisfies and $d$ is the rational function whose square is equal to the discriminant of $\mathbf{w}$. The image of this mapping is in fact dense in $X$. Indeed, if $(x_1,\ldots,x_{4m},d)$ is a point in $X$, denote by $z_1,\ldots,z_{4m}$ the roots of the equation $z^n+x_1z^{n-1}+\ldots+x_n=0$. Then the equations
 \begin{align*}
 a_i &= \frac{z_{4i}+z_{4i+1}+z_{4i+2}+z_{4i+3}}{4} \\
 \sqrt s_i &= \frac{z_{4i}-z_{4i+1}+z_{4i+2}-z_{4i+3}}{4} \\
 \sqrt t_i &= \frac{z_{4i}+z_{4i+1}-z_{4i+2}-z_{4i+3}}{4} \\
 b_i \sqrt {s_i t_i} &= \frac{z_{4i}-z_{4i+1}-z_{4i+2}+z_{4i+3}}{4} 
\end{align*}
are clearly solvable for the variables $a_i,b_i,s_i,t_i$ for $(z_1,\ldots,z_{4m})$ in a Zariski open subset of $\mathbf{C}^{4m}$ and the solution is a point that gets mapped either to $(x_1,\ldots,x_{4m},d)$ or to $(x_1,\ldots,x_{4m},-d)$. Since $X$ is irreducible, the image is a dense Zariski open set in $X$.

Hence if the function $\mathbf{z}$ can be rationally induced from an algebraic function on a space of dimension $<2m$, the function $\mathbf{w}$ also can. However $\mathbf{w}$ is an algebraic function that is unramified on the algebraic torus with coordinates $a,b,s,t$ and its monodromy group is isomorphic to $(\mathbf{Z}_2^2)^m$, i.e. has rank $2m$. By theorem \ref{theorem:functions_on_torus} this function can't be rationally induced from an algebraic function on a space of dimension $<2m$.

In case $n=4m+1$ we can consider instead of $\mathbf{w}$ from the argument above the function $\mathbf{w}(a_1,\ldots,a_{m+1},b_1,\ldots,b_m,s_1,\ldots,s_m,t_1,\ldots,t_m)$ satisfying $\left(\prod_{i=1}^{4m}{(w-w_i)}\right)(w-a_{m+1})=0$ with the same $w_i$ as above.

Cases $n=4m+2$ and $n=4m+3$ can be handled in the same way.
\end{proof}

\section{Local version}
\label{section:local_version}

Theorem \ref{theorem:functions_on_torus} about algebraic functions unramified on the algebraic torus $(\mathbf C^*)^n$ has a local analogue:

\begin{theorem}
 \label{theorem:functions_on_torus_local}
Let $\mathbf{z}$ be a germ at the origin $(0,\ldots,0)$ of an algebraic function defined on $(\mathbf{C}\smallsetminus \{0\})^n$ such that for every algebraic function representing the germ there exists an $\epsilon>0$ such that this algebraic function is unramified  on the punctured polydisc $\{(x_1,\ldots,x_n)\in(\mathbf{C}^*)^n\text{ with }0<|x_i|<\epsilon\text{ for all }i \}$. Let $k$ denote the rank of its monodromy group on this punctured polydisc (it is obviously the same for all representatives of the germ). Then the restriction of $\mathbf{z}$ to this polydisc can be rationally induced from an algebraic function on $\mathbf{C}^k$ and it cannot be rationally induced from an algebraic function on a variety of dimension $<k$ by means of a germ at the origin of a rational mapping.

Moreover $\mathbf{z}$ is dominated by a germ at origin of a tower of extensions of dimensions $k_1,\ldots,k_s$ if and only if $k_1+\ldots+k_s\geq k$.
\end{theorem}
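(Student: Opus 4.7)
The plan is to mirror the proof of Theorem \ref{theorem:functions_on_torus}, substituting germ-theoretic versions of each step while invoking the same small-torus input from \cite{Maz}. The theorem packages both a single-step statement and its tower analogue, but both directions decompose cleanly into an explicit construction for the upper bound and a monodromy-rank obstruction for the lower bound.

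For the upper bound I would repeat the Smith normal form argument. The monodromy representation of $\pi_1$ of the punctured polydisc is a homomorphism $\mathbf Z^n \to G$ onto a group $G$ of rank $k$, so after a toric change of coordinates on the polydisc its kernel equals $\langle \gamma_1^{m_1},\ldots,\gamma_n^{m_n}\rangle$ with $m_n\mid\cdots\mid m_1$ and $m_{k+1}=\cdots=m_n=1$. Then $\psi(x_1,\ldots,x_n)=\mathbf{z}(x_1^{m_1},\ldots,x_n^{m_n})$ is a germ of a monodromy-invariant, and therefore rational, function, and the primitive element theorem produces a single algebraic function $\mathbf{w}(x_1,\ldots,x_k)$ in terms of which every $x_i^{1/m_i}$ is rational, so $\mathbf{z}$ is rationally induced from an algebraic function on $\mathbf{C}^k$. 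For the tower statement, whenever $k_1+\cdots+k_s\geq k$ one splits the $k$ independent radicals $x_i^{1/m_i}$ into consecutive blocks of sizes $k_1,\ldots,k_s$; each block is handled by the same primitive element argument and contributes one step rationally induced from dimension $k_j$.

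For the lower bound I would assume that $\mathbf{z}$ is rationally induced from an algebraic function on a variety of dimension $<k$ via a germ at the origin of a rational map $r$, and derive a contradiction. Lemma \ref{lemma:from_alg_fun_to_covering}, applied after restricting to a small polydisc on which $r$, $\mathbf{w}\circ r$ and the intermediate rational function $R$ of Definition \ref{definition:rationally_induced} are all defined and give an unramified covering, produces a Zariski open set $U$ near the origin over which the covering associated with $\mathbf{z}$ is induced from a topological space of dimension $<k$. Invoking \cite{Maz}, I would then pick $\epsilon_1,\ldots,\epsilon_n$ small enough that the real torus $\{|x_1|=\epsilon_1,\ldots,|x_n|=\epsilon_n\}$ lies inside $U$; since the punctured polydisc deformation-retracts onto this torus, the monodromy of the restricted covering has rank $k$, which contradicts Theorem \ref{theorem:mainresult}. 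The tower lower bound is obtained verbatim, substituting Lemma \ref{lemma:from_alg_fun_to_covering_comp} and Theorem \ref{theorem:mainresult_comp} for their single-step analogues.

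The only genuinely new ingredient required beyond what appears in the proof of Theorem \ref{theorem:functions_on_torus} is the appeal to \cite{Maz}, which guarantees that arbitrarily small coordinate tori sit inside any Zariski open set whose complement is a proper hypersurface in a neighborhood of the origin. This is the step I expect to be the main obstacle, or at least the only step that is not a mechanical translation of the global proof: the global argument also relied on it, but in the local setting one must verify that the relevant Zariski open set $U$ really is of the kind for which \cite{Maz} applies (after possibly shrinking the representative neighborhood so that $r$, $R$, and the associated covering are simultaneously defined on the punctured polydisc). Once this local small-torus statement is granted, all other steps carry over immediately from the global setting.
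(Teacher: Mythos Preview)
Your proposal is correct and follows exactly the approach the paper takes: the paper's entire proof of this theorem is the single sentence ``The proof of this version of theorem \ref{theorem:functions_on_torus} practically coincides with the proof of theorem \ref{theorem:functions_on_torus} itself.'' Your concern about the applicability of \cite{Maz} in the local setting is reasonable caution, but the paper does not treat it as an additional obstacle, since the same small-torus argument was already invoked in the global proof.
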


The proof of this version of theorem \ref{theorem:functions_on_torus} practically coincides with the proof of theorem \ref{theorem:functions_on_torus} itself.

We will now present a construction that allows to use this result to obtain some information about any algebraic function. To do so we recall the concept of a Parshin point and a neighbourhood of a Parshin point:

Let $X$ be a variety. Let $V_\cdot$ be a flag of germs at a point $p\in X$ of varieties $V_n\supset V_{n-1} \supset \ldots \supset V_0=\{p\}$ with $\dim{V_i}=i$ and each $V_{i}$ irreducible along $V_{i-1}$. Such a flag will be referred to as a \textbf{Parshin point} of $X$.

 In \cite{Maz1} Mazin shows that for any Parshin point and any Zariski open set $U\subset X$ one can find a regular mapping $\phi: D\to X$ from a polydisc $D=\{(x_1,\ldots,x_n)\in\mathbf{C}^n$ with $|x_i|<\epsilon$ for all $i$ $\}$ to $X$ sending the standard flag in $\mathbf{C}^n$ (i.e. $\mathbf{C}^n\supset Z(x_1) \supset \ldots \supset Z(x_1,\ldots,x_n)$, where $Z(x_1,\ldots,x_k)$ denotes the germ at origin of the set where $x_1=\ldots=x_k=0$) to the flag $V_\cdot$ and sending the compliment to the coordinate cross isomorphically onto a (classically) open subset of $U$ contained in the compliment to $V_{n-1}$.
 
Let now $\mathbf{z}$ be an algebraic function defined on a Zariski open subset $U$ of a variety $X$. By shrinking $U$ we can assume that this function is unramified over $U$. Then any Parshin point in $X$ gives rise to a neighbourhood in the above sense and hence, via pullback, to an algebraic function $\phi^*\mathbf{z}$ on the polydisc $D$ unramified over the punctured polydisc $\{(x_1,\ldots,x_n)\in(\mathbf{C}^*)^n$ with $0<|x_i|<\epsilon$ for all $i\}$. To this function we can apply theorem \ref{theorem:functions_on_torus_local}: if the monodromy group of $\phi^*\mathbf{z}$ on the punctured polydisc has rank $k$, then the original function $\mathbf{z}$ can't be rationally induced from an algebraic function  on a variety of dimension $<k$.

Thus we arrive at the following theorem:

\begin{theorem}
\label{theorem:local_obstruction}
Let $z$ be an algebraic function defined on a variety $X$. Suppose that there exists a Parshin point on $X$ and its punctured neighbourhood as described above such that the monodromy group of $z$ on this punctured neighbourhood has rank $k$. Then $z$ can't be rationally induced from an algebraic function on a variety of dimension $<k$.

Moreover $z$ is not dominated by a tower of extensions of dimensions $k_1,\ldots,k_s$ if  $k_1+\ldots+k_s < k$.
\end{theorem}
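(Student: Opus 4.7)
The plan is to argue by contradiction, reducing the global rationality statement on $X$ to the local statement on the punctured polydisc that we have already proven in Theorem \ref{theorem:functions_on_torus_local} (via Theorem \ref{theorem:mainresult}). Suppose $z$ is rationally induced from an algebraic function on a variety $Y$ of dimension $<k$. By Lemma \ref{lemma:from_alg_fun_to_covering} there is a Zariski open set $U \subset X$ such that the covering associated to $z|_U$ can be induced (topologically) from a space of dimension $<k$; by shrinking $U$ further we can also assume $z$ is unramified over $U$.

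Next I would invoke Mazin's construction from \cite{Maz1}: given the hypothesized Parshin point $V_\cdot$ on $X$ and this particular Zariski open set $U$, we obtain a regular map $\phi\colon D\to X$ from a polydisc $D$ such that $\phi$ carries the standard flag to $V_\cdot$ and sends the complement of the coordinate cross isomorphically (in the classical sense) onto an open subset $\phi(D^*) \subset U$, where $D^* = \{0<|x_i|<\epsilon\}$. The pullback $\phi^* z$ is the germ appearing in the hypothesis of the theorem, hence unramified on $D^*$, and its monodromy group on $D^*$ has rank $k$. Since $\phi(D^*) \subset U$, the covering associated to $\phi^*z$ over $D^*$ factors as a topological pullback through the map used to induce $z|_U$ from a space of dimension $<k$. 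In particular, the covering associated to $\phi^*z|_{D^*}$ is itself induced from a space of dimension $<k$.

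But $D^*$ deformation retracts to the torus $(S^1)^n$, and on this torus the induced covering has monodromy group of rank $k$. Theorem \ref{theorem:mainresult} then says it cannot be induced from any space of dimension $<k$, contradicting the previous paragraph. This proves the first assertion.

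For the tower version one repeats the same argument verbatim, replacing Lemma \ref{lemma:from_alg_fun_to_covering} by Lemma \ref{lemma:from_alg_fun_to_covering_comp} and Theorem \ref{theorem:mainresult} by Theorem \ref{theorem:mainresult_comp}: if $z$ were dominated by a tower of extensions of dimensions $k_1,\ldots,k_s$ with $\sum k_i<k$, the associated covering of $\phi^*z|_{D^*}$ would be dominated over $D^*$ by a composition of coverings each inducible from a space of dimension $\leq k_i$, contradicting Theorem \ref{theorem:mainresult_comp} applied to the torus $D^* \simeq (S^1)^n$ and the rank-$k$ monodromy. The only subtlety, and the one place the argument has real content rather than bookkeeping, is the existence of a Parshin neighbourhood $\phi$ whose punctured image is contained in a prescribed Zariski open $U$; this is precisely Mazin's theorem cited from \cite{Maz1}, so the obstacle is resolved by the input we already have.
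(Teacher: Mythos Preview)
Your argument is correct and follows the same approach as the paper. The paper's own treatment of this theorem is the discursive paragraph immediately preceding the statement: pull back $z$ through a Mazin neighbourhood $\phi$ of the Parshin point and then invoke Theorem~\ref{theorem:functions_on_torus_local}; you have simply unpacked that invocation into its constituents (Lemma~\ref{lemma:from_alg_fun_to_covering}, Mazin's result for the prescribed Zariski open $U$, retraction of $D^*$ onto a torus, and Theorem~\ref{theorem:mainresult}), which is exactly how the proof of Theorem~\ref{theorem:functions_on_torus_local} itself runs. One small imprecision: when you write ``the pullback $\phi^* z$ is the germ appearing in the hypothesis,'' the $\phi$ you construct (for the shrunken $U$ coming from Lemma~\ref{lemma:from_alg_fun_to_covering}) need not be the same neighbourhood map as the one in the hypothesis---what you actually need, and what is implicit in the paper's phrasing ``its punctured neighbourhood,'' is that the local monodromy group at a Parshin point is independent of the particular Mazin neighbourhood chosen, which is part of the cited construction.
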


We can apply theorem \ref{theorem:local_obstruction} above to reprove theorems \ref{theorem:universal_function} and \ref{theorem:universal_function_with_discriminant}.

All we have to do is exhibit flags around which the monodromy of the universal algebraic function of order $n$ has rank $\lfloor n/2\rfloor$ (for theorem \ref{theorem:universal_function}) or has rank $2\lfloor n/4\rfloor$ and is realized by even permutations only (for theorem \ref{theorem:universal_function_with_discriminant}).

We will show how to choose such flags for the case $n=4$.

Let $\mathbf{z}$ be the function satisfying $z^4+x_1z^3+x_2z^2+x_3z+x_4=0$. Let $p:\mathbf{C}^4\to \mathbf C^4$ be the function sending roots $z_1,\ldots,z_4$ of this equation to its coefficients $x_1,\ldots,x_4$.

The branched covering $p$ is the Galois covering associated to $\mathbf{z}$, so it's enough to consider it instead of the function $\mathbf{z}$.

We will exhibit the flags we are interested in as images under $p$ of flags in $\mathbf{C}^4$ with coordinates $z_1,\ldots,z_4$.

The first flag is the image of the flag $Z(z_1=z_2)\supset Z(z_1=z_2,z_3=z_4) \supset Z(z_1=z_2=z_3=z_4) \supset Z(z_1=z_2=z_3=z_4=0)$ where $Z(\mbox{equation})$ stands for the set of points $(z_1,\ldots,z_4)$ for which the equation holds. This is a flag of irreducible varieties and its image under $p$ is a Parshin point.

The branched covering $p$ realizes the quotient of $\mathbf{C}^4$ by the action of the permutation group $S_4$ acting by permuting coordinates. Hence the monodromy of $p$ in a neighbourhood of flag can be identified with the group of all permutations that stabilize any of the flags in its preimage. In our case the permutations that stabilize the flag $Z(z_1=z_2)\supset Z(z_1=z_2,z_3=z_4) \supset Z(z_1=z_2=z_3=z_4) \supset Z(z_1=z_2=z_3=z_4=0)$ are the trivial permutation, $(z_1,z_2)$, $(z_3,z_4)$ and $(z_1,z_2)(z_3,z_4)$. Hence the monodromy of $\mathbf{z}$ around the flag has rank 2.

\begin{figure}[ht]
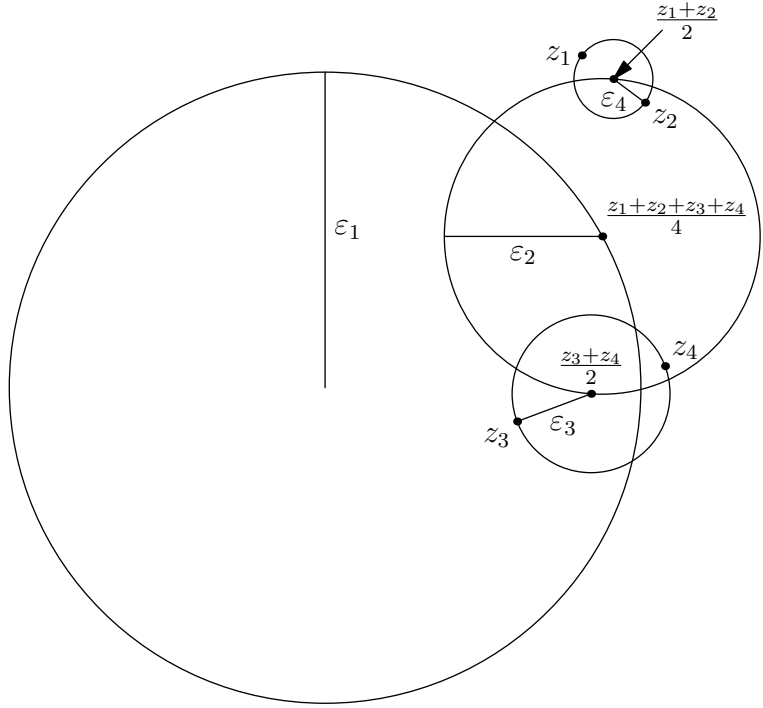

\centering
\begin{asy}
size (10cm);

pair x1,x2,x3,x4;

real angle1=0.5,angle2=1.5,angle3=2.5,angle4=3.5;

x1=16*(cos(angle1),sin(angle1))+8*(cos(angle2),sin(angle2))+2*(cos(angle3),sin(angle3));
x2=16*(cos(angle1),sin(angle1))+8*(cos(angle2),sin(angle2))-2*(cos(angle3),sin(angle3));
x3=16*(cos(angle1),sin(angle1))-8*(cos(angle2),sin(angle2))+4*(cos(angle4),sin(angle4));
x4=16*(cos(angle1),sin(angle1))-8*(cos(angle2),sin(angle2))-4*(cos(angle4),sin(angle4));

draw(circle((0,0),16));
draw((0,0)--(0,16));
label("$\varepsilon_1$",((0,0)+(0,16))/2,E);

label("$\frac{z_1+z_2+z_3+z_4}{4}$",(x1+x2+x3+x4)/4,0.3*NE);
dot((x1+x2+x3+x4)/4);

draw(circle((x1+x2+x3+x4)/4,8));
draw((x1+x2+x3+x4)/4--(x1+x2+x3+x4)/4+(-8,0));
label("$\varepsilon_2$",((x1+x2+x3+x4)/4+(x1+x2+x3+x4)/4+(-8,0))/2,S);

label("$\frac{z_1+z_2}{2}$",(x1+x2)/2,6*NE);
dot((x1+x2)/2);
draw((x1+x2)/2--((x1+x2)/2+3.5*NE),BeginArrow);

label("$\frac{z_3+z_4}{2}$",(x3+x4)/2,N);
dot((x3+x4)/2);

draw(circle((x3+x4)/2,4));
draw((x3+x4)/2--x3);
label("$\varepsilon_3$",((x3+x4)/2+x3)/2,SSE);

label("$z_3$",x3,SW);
dot(x3);

label("$z_4$",x4,NE);
dot(x4);

draw(circle((x1+x2)/2,2));
draw((x1+x2)/2--x2);
label("$\varepsilon_4$",((x1+x2)/2+x2)/2,0.2*SW);

label("$z_1$",x1,W);
dot(x1);

label("$z_2$",x2,SE);
dot(x2);

\end{asy}
\label{fig:cycle}
\caption{
In the case $n=4$ the torus on which the obstruction lives is the set of $x_1,\ldots,x_4$ corresponding to the roots $z_1,\ldots,z_n$ satisfying 
$|z_1-z_2|=2\varepsilon_4,|z_3-z_4|=2\varepsilon_3,|\frac{z_1+z_2}{2}-\frac{z_3+z_4}{2}|=2\varepsilon_2,|\frac{z_1+z_2+z_3+z_4}{4}|=\varepsilon_1$}

\end{figure}

For the situation where we are only allowing even permutations, the flag $Z(z_1+z_2=z_3+z_4)\supset Z(z_1=z_3,z_2=z_4) \supset Z(z_1=z_2=z_3=z_4) \supset Z(z_1=z_2=z_3=z_4=0)$ has the desired properties. The permutations of $S_4$ that stabilize this flag are the trivial permutation, $(z_1,z_2)(z_3,z_4)$, $(z_1,z_3)(z_2,z_4)$ and their product $(z_1,z_4)(z_2,z_3)$. These permutations are even. Hence the monodromy of $\mathbf{z}$ around the image of this flag under $p$ is of rank $2$ and consists of even permutations only.

For larger values of $n$ the corresponding flags should be the images of
\begin{itemize}
\item $$Z(z_1=z_2)\supset Z(z_1=z_2,z_3=z_4)\supset Z(z_1=z_2,z_3=z_4,z_5=z_6)\supset\ldots$$
continued  until there are no more unused pairs of coordinates to equate and then continued all the way to a point in an arbitrary manner.
\item \begin{align*} &Z(z_1+z_2=z_3+z_4)\supset Z(z_1=z_3,z_2=z_4) \supset \\
&Z(z_1=z_3,z_2=z_4, z_5+z_6=z_7+z_8) \supset Z(z_1=z_3,z_2=z_4, z_5=z_7,z_6=z_8) \supset \ldots 
\end{align*}
continued  until there are no more unused quadruples of coordinates to use and then continued all the way to a point in an arbitrary manner.
\end{itemize}

\section{Generic algebraic function of \texorpdfstring{$k$}{k} parameters and degree \texorpdfstring{$\geq 2k$}{at least 2k} can't be simplified}
\label{section:generic}

The same flag that we used to show that the universal function of $n$ parameters can't be reduced to less than $\lfloor n/2\rfloor$ parameters gives some useful information about any other algebraic function as well. Indeed, any algebraic function of degree $n$ can be induced from the universal algebraic function. Thus we can think of it as the restriction of the universal algebraic function to some subvariety $X$ in $\mathbf C^n$. To this function we can apply our arguments with the flag obtained by intersecting a flag in $\mathbf C^n$ with $X$.

\begin{notation}
Let $p:\mathbf{C}^n\to \mathbf{C}^n$ be the mapping that sends the point $(z_1,\ldots,z_n)$ to the coefficients $(x_1,\ldots,x_n)$ of the equation $z^n+x_1z^{n-1}+\ldots+x_n=0$. We will denote by $D_k$ the image under $p$ of the set $\{(z_1,\ldots,z_n)|z_1=z_2,\ldots,z_{2k-1}=z_{2k},\mbox{no other equalities hold between the } z_i\mbox{'s}\}$.
\end{notation}

Let $x^\circ$ be the image under $p$ of the point $(z_1^\circ,\ldots,z_n^\circ)$ with $z_1^\circ=z_2^\circ,\ldots,z_{2k-1}^\circ=z_{2k}^\circ$. The branches of the algebraic functions $$(z_1-z_2)^2,\ldots,(z_{2k-1}-z_{2k})^2,\frac{z_1+z_2}{2}-z_1^\circ,\ldots,\frac{z_{2k-1}+z_{2k}}{2}-z_{2k-1}^\circ,z_{2k+1}-z_{2k-1}^\circ,\ldots,z_{n}-z_{n}^\circ $$ that assume the value $0$ at the point $(x_1^\circ,\ldots,x_n^\circ)$ form a coordinate system in a neighbourhood of $x^\circ$. If we denote these coordinate functions by $\tilde{x}_1,\ldots,\tilde{x}_n$, then the discriminant is defined in a small neighbourhood of the point $z^\circ$ by the equation $\tilde{x}_1\cdot \ldots \cdot \tilde{x}_k=0$. This shows that $D_k$ is contained in the locus of the points where the discriminant variety has a normal crossing singularity. If we choose the coordinates on the source $\mathbf{C}^n$ to be $(\tilde{z}_1,\ldots,\tilde{z}_n)=(z_1-z_2,\ldots,z_{2k-1}-z_{2k},\frac{z_1+z_2}{2}-z_1^\circ,\ldots,\frac{z_{2k-1}+z_{2k}}{2}-z_{2k-1}^\circ,z_{2k+1}-z_{2k-1}^\circ,\ldots,z_{n}-z_{n}^\circ)$ then in these coordinates $p$ is given by the formula $(\tilde{x}_1,\ldots,\tilde{x}_n)=p(\tilde{z}_1,\ldots,\tilde{z}_n)=(\tilde{z}_1^2,\ldots,\tilde{z}_k^2,\tilde{z}_{k+1},\ldots,\tilde{z}_n)$.

\begin{theorem}
\label{theorem:transversalintersection}
 Let $\xi_X$ be the algebraic function obtained by restricting the universal algebraic function on $\mathbf C^n$ to a subvariety $X\subset \mathbf C^n$. Let $D_k$ be the subset of $\mathbf{C}^n$ defined above. Suppose that $X$ and $D_k$ intersect transversally at least at one point. Then the function $\xi_X$ can't be rationally induced from a function of less than $k$ parameters.
\end{theorem}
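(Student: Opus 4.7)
The plan is to apply Theorem \ref{theorem:local_obstruction} by exhibiting a Parshin point on $X$ around which the monodromy of $\xi_X$ already has rank $k$. Let $x^\circ$ be a point at which $X$ meets $D_k$ transversally, and work in the local coordinates $(\tilde{x}_1,\ldots,\tilde{x}_n)$ on $\mathbf{C}^n$ centered at $x^\circ$ introduced just before the theorem. In these coordinates $D_k$ is the smooth germ $\{\tilde{x}_1=\ldots=\tilde{x}_k=0\}$, so transversality forces $X$ to be smooth at $x^\circ$ and the differentials $d(\tilde{x}_1|_X),\ldots,d(\tilde{x}_k|_X)$ to be linearly independent on $T_{x^\circ}X$. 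Thus the restrictions $y_i:=\tilde{x}_i|_X$ for $i=1,\ldots,k$ can be extended to a local coordinate system $y_1,\ldots,y_m$ on $X$ at $x^\circ$, where $m=\dim X$.

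Now define the Parshin flag $V_m\supset V_{m-1}\supset\ldots\supset V_0$ at $x^\circ$ by $V_{m-j}:=\{y_1=\ldots=y_j=0\}$; each $V_{m-j}$ is a smooth germ (hence irreducible) containing $V_{m-j-1}$. Mazin's construction then provides a polydisc $D\subset\mathbf{C}^m$ and a map $\phi:D\to X$ sending the standard flag of $D$ to $V_\cdot$ and mapping the complement of the coordinate cross isomorphically onto a classical open subset of $X$ contained in the unramified locus of $\xi_X$ (possible because this ramification locus is contained in $\{y_1\cdots y_k=0\}$ near $x^\circ$). Consequently $\phi^*\xi_X$ is an honest covering over the punctured polydisc $D^\circ=\{0<|w_i|<\epsilon\}$.

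The fundamental group of $D^\circ$ is generated by loops $\gamma_i$ around the hyperplanes $\{w_i=0\}$, so the monodromy group of $\phi^*\xi_X$ is generated by their monodromies. For $i>k$ the hypersurface $\{y_i=0\}$ does not lie in the ramification locus of $\xi_X$ near $x^\circ$, so $\gamma_i$ acts trivially. For $i\leq k$ the loop $\phi\circ\gamma_i$ is a small loop in $\mathbf{C}^n$ around $\{\tilde{x}_i=0\}$; since $\tilde{x}_i=\tilde{z}_i^2$ with $\tilde{z}_i=z_{2i-1}-z_{2i}$, while the half-sum $\frac{z_{2i-1}+z_{2i}}{2}$ (a function of $\tilde{x}_{k+i}$ alone) is held fixed, tracing $\gamma_i$ interchanges $z_{2i-1}$ and $z_{2i}$ while leaving the other roots fixed. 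Hence the monodromy is generated by the $k$ disjoint transpositions $(z_1,z_2),(z_3,z_4),\ldots,(z_{2k-1},z_{2k})$, so it is isomorphic to $(\mathbf{Z}_2)^k$, a group of rank $k$.

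Theorem \ref{theorem:local_obstruction} then forbids $\xi_X$ from being rationally induced from an algebraic function on a variety of dimension $<k$. The only step requiring care is the identification of the monodromies in the previous paragraph: one must combine transversality with the explicit local model $p(\tilde{z}_1,\ldots,\tilde{z}_n)=(\tilde{z}_1^2,\ldots,\tilde{z}_k^2,\tilde{z}_{k+1},\ldots,\tilde{z}_n)$ exhibited before the theorem statement in order to confirm that the branches of $\xi_X$ near $x^\circ$ are really pairwise swapped by the loops $\gamma_1,\ldots,\gamma_k$ and not permuted in some unexpected way by the embedding of $X$ into $\mathbf{C}^n$.
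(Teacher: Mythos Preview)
Your proof is correct and follows essentially the same route as the paper: choose local coordinates at a transversal intersection point, use them to build a Parshin flag on $X$, and verify via the local model $p(\tilde z_1,\ldots,\tilde z_n)=(\tilde z_1^2,\ldots,\tilde z_k^2,\tilde z_{k+1},\ldots,\tilde z_n)$ that the monodromy on the associated punctured polydisc is $(\mathbf Z_2)^k$, so Theorem~\ref{theorem:local_obstruction} applies. The only difference is cosmetic: the paper pulls back $X$ to $Z=p^{-1}(X)$ and writes $p|_Z$ explicitly as $(\tilde z_1^2,\ldots,\tilde z_k^2,\tilde z_{k+1},\ldots,\tilde z_m)$ in matching coordinates, reading off the rank from this formula, whereas you track individual loops $\gamma_i$ and identify each monodromy as the transposition $(z_{2i-1}\,z_{2i})$. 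One small wording issue: along $\phi\circ\gamma_i$ the half-sum and the other roots are not literally ``held fixed'' (the path moves inside $X$), but they return to their initial values because the loop has winding number zero around the other discriminant components; this is exactly what transversality guarantees, and your final paragraph correctly flags that this identification is the point requiring care.
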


\begin{proof}
Let $p$ denote as before the function that sends the roots $z_1,\ldots,z_n$ of the equation $z^n+x_1 z^{n-1}+\ldots+x_n=0$ to its coefficients $x_1,\ldots,x_n$.

Let $x^\circ=p(z^\circ)$ be a point in the intersection of $X$ with $D_k$ and $z^\circ=(z_1^\circ,\ldots,z_n^\circ)$ is such that $z_1=z_2,\ldots,z_{2k-1}=z_{2k}$ and no other equalities hold between the $z_i$'s. As we noted before, one can find coordinates $\tilde{x}_1,\ldots,\tilde{x}_n$ and $\tilde{z}_1,\ldots,\tilde{z}_n$ in small neighbourhoods of the points $x^\circ$ and $z^\circ$ respectively so that the mapping $p$ in these coordinates is simply $p(\tilde{z}_1,\ldots,\tilde{z}_n)=(\tilde{z}_1^2,\ldots,\tilde{z}_k^2,\tilde{z}_{k+1},\ldots,\tilde{z}_n)$.

Since $D_k$ is given in these coordinates by the equations $\tilde{x}_1=\ldots=\tilde{x}_k=0$ and $X$ is transversal to it, one sees that $X$ is transversal to the map $p$ and hence its preimage $Z=p^{-1}(X)$ is a manifold in a neighbourhood of the point $z^\circ$. The formula for $p$ shows that the tangent space to $Z$ at the point $z^\circ$ contains the vectors $\partial/\partial \tilde{z}_1,\ldots,\partial/\partial \tilde{z}_k$. In particular the differentials $d \tilde{z}_1,\ldots,d \tilde{z}_k$ are linearly independent on this tangent space and hence the functions $\tilde{z}_1,\ldots,\tilde{z}_k$ can be extended to local coordinates on $Z$ by adding if necessary some of the other $\tilde{z}_i$'s. Let's assume without loss of generality that $\tilde{z}_1,\ldots,\tilde{z}_k,\ldots,\tilde{z}_m$ are local coordinates on $Z$. By the transversality condition $X\pitchfork D_k$ and the definition of $Z$ as the preimage of $X$ it follows that $\tilde{x}_1,\ldots,\tilde{x}_m$ are local coordinates on $X$ at $x^\circ$. In these coordinates the restriction of $p$ on $Z$ is given by $p(\tilde{z}_1,\ldots,\tilde{z}_m)=(\tilde{z}_1^2,\ldots,\tilde{z}_k^2,\tilde{z}_{k+1},\ldots,\tilde{z}_m)$. Hence the local monodromy of $\xi_X$ around the flag on $X$ given by $\{\tilde{x}_1=0\}\supset\ldots\supset\{\tilde{x}_1=\ldots=\tilde{x}_m=0\}$ is of rank $k$ and hence $\xi_X$ is not rationally induced from any algebraic function with less than $k$ parameters.
 
\end{proof}

Much weaker assumptions than transversality of intersection are in fact needed for the conclusions of the theorem to hold. We won't need this greater generality however.

We will use this theorem now to show that a generic algebraic function depending on $k$ parameters and having degree $n$ can't be rationally induced from an algebraic function of less than $k$ parameters provided that $n\geq 2k$. The word ``generic'' can be made precise in many ways. What follows is one of them:

\begin{theorem}
Let $L$ be a linear space of polynomials on $\mathbf C^n$ that contains constants and linear functions. Then for generic $p_1,\ldots,p_{n-k}\in L$ the algebraic function obtained from the universal algebraic function on $\mathbf C^n$ by restriction to the set $p_1(x)=0,\ldots,p_{n-k}(x)=0$ can't be rationally induced from an algebraic function of less than $k$ parameters provided that $n\geq 2k$.
\end{theorem}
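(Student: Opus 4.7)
The plan is to deduce the theorem from Theorem~\ref{theorem:transversalintersection}. For a tuple $(p)=(p_1,\ldots,p_{n-k})\in L^{n-k}$, write $X_{(p)}=\{x\in\mathbf{C}^n : p_1(x)=\cdots=p_{n-k}(x)=0\}$; it suffices to show that for $(p)$ in a Zariski dense open subset of $L^{n-k}$ the intersection $X_{(p)}\cap D_k$ is transversal at some point. Since $\dim D_k=n-k$ (visible in the local model $p(\tilde z_1,\ldots,\tilde z_n)=(\tilde z_1^2,\ldots,\tilde z_k^2,\tilde z_{k+1},\ldots,\tilde z_n)$ recorded just before Theorem~\ref{theorem:transversalintersection}) and $\dim X_{(p)}=k$ for generic $(p)$, the expected dimension of the intersection is $0$, so transversality is dimensionally expected.

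First I would exhibit a single tuple $(p^\star)$ whose zero locus meets $D_k$ transversally. Pick any smooth point $x^\circ\in D_k$, nonempty because $n\geq 2k$, and consider the linear map
\[
\Phi : L \longrightarrow \mathbf{C}\oplus T^*_{x^\circ}D_k, \qquad p \longmapsto (p(x^\circ),\ d_{x^\circ}p|_{T_{x^\circ}D_k}).
\]
The constants in $L$ surject onto the first summand; since $L$ contains all linear forms on $\mathbf{C}^n$ and the restriction $(\mathbf{C}^n)^*\twoheadrightarrow T^*_{x^\circ}D_k$ is surjective, the linear forms in $L$ surject onto the second. Hence $\Phi$ is surjective, and I can choose $p^\star_1,\ldots,p^\star_{n-k}\in L$ with each $p^\star_i(x^\circ)=0$ and with $d_{x^\circ}p^\star_i$ restricting to a basis of $T^*_{x^\circ}D_k$. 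Then $T_{x^\circ}X_{(p^\star)}=\bigcap_i\ker(d_{x^\circ}p^\star_i)$ has dimension $k$ and meets $T_{x^\circ}D_k$ only at $0$, so $X_{(p^\star)}$ is transversal to $D_k$ at $x^\circ$.

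Next I would promote this single example to Zariski density. The set
\[
U=\{(p)\in L^{n-k}: X_{(p)} \text{ meets } D_k \text{ transversally at some point}\}
\]
is constructible: it is the projection to $L^{n-k}$ of the locally closed subset of $L^{n-k}\times D_k^{\mathrm{sm}}$ cut out by $p_1(x)=\cdots=p_{n-k}(x)=0$ together with the non-vanishing of the $(n-k)\times(n-k)$ Jacobian minor of $(p_1,\ldots,p_{n-k})$ in local coordinates on $D_k^{\mathrm{sm}}$. The implicit function theorem applied at $((p^\star),x^\circ)$ shows that $U$ contains a Euclidean neighbourhood of $(p^\star)$, and a constructible subset of the irreducible complex variety $L^{n-k}$ with nonempty Euclidean interior is automatically Zariski dense. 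Hence the complement of $U$ lies in a proper Zariski closed subset of $L^{n-k}$, and for every $(p)\in U$ Theorem~\ref{theorem:transversalintersection} gives the desired conclusion. The hardest step will be this last passage from one explicit good tuple to Zariski density; the construction of $(p^\star)$ itself is linear algebra enabled by the hypothesis that $L$ contains all constants and all linear forms, and the final implication is a direct application of Theorem~\ref{theorem:transversalintersection}.
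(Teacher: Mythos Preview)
Your argument is correct and reaches the same conclusion as the paper, but the route differs. Both proofs reduce to Theorem~\ref{theorem:transversalintersection}, so the task in each case is to show that for generic $(p)\in L^{n-k}$ the zero set $X_{(p)}$ meets $D_k$ transversally at some point. The paper does this via parametric transversality: it forms the incidence variety $I=\{(f,x):f(x)=0\}\subset L^{n-k}\times\mathbf{C}^n$, checks that the projection $I\to\mathbf{C}^n$ is a submersion (using that $L$ contains the constants), pulls back $D_k$ to $I_D\subset I$, and applies Sard's lemma to the projection $I_D\to L^{n-k}$ to obtain transversality for almost every $f$; non-emptiness of the intersection is then argued separately by a fibre-dimension count together with one explicit $f$ for which the intersection is zero-dimensional. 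You instead build one explicit tuple $(p^\star)$ with a transversal intersection at a chosen $x^\circ\in D_k$, using the hypothesis on $L$ only as a statement of linear algebra (surjectivity of $\Phi$), and then promote this single example to Zariski density via constructibility plus the implicit function theorem. Your approach is shorter and avoids Sard's lemma and the separate non-emptiness argument; the paper's approach is the standard parametric-transversality package and yields the slightly stronger ``full measure'' statement along the way. Both make essential use of the assumption that $L$ contains constants and linear forms, but at different places: you use it once to produce $(p^\star)$, while the paper uses constants for the submersion property of $I\to\mathbf{C}^n$ and linear forms to exhibit an isolated intersection.
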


\begin{proof}
 According to theorem \ref{theorem:transversalintersection} it is enough to show that for generic $p_1,\ldots,p_{n-k}\in L$ the intersection of $D_k$ with  $p_1(x)=0,\ldots,p_{n-k}(x)=0$ is non-empty and transversal. Since the set of polynomials for which the intersection is non-empty and transversal is an algebraic set, it is enough to show it has non-zero measure.
 
 The fact that the set of equations in $L^k$ whose zero set intersects $D^k$ transversally is of full measure follows from Sard's lemma.
 
 Indeed, consider the subset $I=\{(f,x)|f(x)=0,f\in L^k, x\in \mathbf C^n\}$. This subset is a submanifold of $L^{n-k}\times \mathbf{C}^n$. Indeed, the differential of the evaluation function $(f,x)\to \mathbf C^k$ evaluated on a tangent vector $(\phi,\xi)\in T_{(f,x)}L^{n-k}\times \mathbf C^n$ is equal to $\phi(x)+d_x f(\xi)$. Since $L$ contains all constants, this differential is of full rank at all points.
 
 For every point $f\in L^{n-k}$ which is regular for the projection from $I$ to $L^{n-k}$, the zero set of $f$ is a submanifold of $\mathbf C^n$, hence by Sard's lemma the zero set of $f$ is a submanifold of $\mathbf C^n$ for almost all $f\in L^{n-k}$.
 
 In a similar way we can show it is transversal to $D_k$ for almost all $f\in L^{n-k}$.

 Indeed, the projection from $I$ onto $\mathbf C^n$ is a submersion (if we fix $\xi\in T_x \mathbf C^n$, we can choose $\phi\in T_f L^{n-k}$ so that $\phi(x)+d_x f(\xi)=0$, because $L$ contains all constants).
 
 Hence the preimage $I_D$ of $D_k$ under this projection is a submanifold of $I$.
 
 Now we claim that for any point $f\in L^{n-k}$ which is regular for the projection from $I_D$ to $L^{n-k}$ the set $\{x\in \mathbf C^k| f(x)=0\}$ is transversal to $D_k$. Indeed, let $x\in D_k$ be a point such that $f(x)=0$ and let $\xi$ be any vector in $T_x \mathbf C^n$. As we noted before the projection from $I$ to $\mathbf C^n$ is a submersion and hence we can find $\phi \in T_f L^{n-k}$ such that $(\phi,\xi)\in T_{(f,x)} I$, i.e. $d_x f(\xi)=-\phi(x)$. Since $f$ is a regular point of the projection from $I_D$ to $L^{n-k}$, one can find a vector $\xi'\in T_x D_k$ such that $(\phi,\xi')\in T_{(f,x)}I_D$. Thus $d_x f(\xi)=-\phi(x)=d_x f(\xi')$, i.e. $\xi-\xi'\in \ker d_x f$. Hence $\xi$ is a sum of a vector tangent to $D_k$ and a vector tangent to the level set of $f$, i.e. the level set of $f$ is indeed transversal to $D_k$.
 
 Sard's lemma then guarantees that for almost any $f$ the level set of $f$ is transversal to $D_k$.
 
Finally we have to show that the set of equations having at least one solution on $D_k$ is of full measure.

Suppose that the dimension of the space $L$ is equal to $l$.

 All fibers of the projection from $I_D$ to $\mathbf C^n$ are $(n-k)(l-1)$-dimensional. Indeed, the condition that an equation in $L$ vanishes at a given point $x$ is a linear condition and it is never satisfied by all equations in $L$ as $L$ contains constants. Thus the space $I_D$ is $(n-k)l$-dimensional. Since the set $I_D$ is an affine manifold its image under projection to $L^{n-k}$ is either of full measure or is contained in a proper affine  subvariety of $L ^{n-k}$. Suppose that the latter is the case. Then the dimension of each component of the preimage of any point $f\in L ^ {n-k}$ is at least 1 by (\cite{Mum}, theorem 3.13). One can however find equations $f\in L^{n-k}$ whose zero set on $D_k$ contains an isolated point $x\in D_k$: for instance one can take affine functions that vanish at $x$ and whose differentials are linearly independent when restricted to the tangent space $T_x D_k$.
 
\end{proof}

We'll now give another result of a similar nature.

\begin{theorem}
Let $L$ be a linear space of polynomials on $\mathbf C^k$ that contains constants and linear functions. Then for generic $p_1,\ldots,p_{n}\in L$ the algebraic function satisfying $z^n + p_1(x) z^{n-1}+\ldots+p_n(x)=0, x\in \mathbf C ^k$ can't be rationally induced from an algebraic function of less than $k$ parameters provided that $n\geq 2k$.
\end{theorem}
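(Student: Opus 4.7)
The plan is to reduce to Theorem \ref{theorem:local_obstruction}, following the same strategy as the proof of the preceding theorem, but applied to the evaluation map $\phi_p:\mathbf{C}^k\to\mathbf{C}^n$, $\phi_p(x)=(p_1(x),\ldots,p_n(x))$ rather than to a complete intersection defined by vanishing of $n-k$ elements of $L$. The algebraic function in question is the pullback $\phi_p^*\mathbf{z}_{\mathrm{univ}}$ of the universal algebraic function via $\phi_p$. I want to show that for generic $p\in L^n$ the map $\phi_p$ meets the stratum $D_k\subset \mathbf{C}^n$ transversally at some point, so that the very argument used in the proof of Theorem \ref{theorem:transversalintersection} produces a Parshin flag on $\mathbf{C}^k$ around which the monodromy of $\phi_p^*\mathbf{z}_{\mathrm{univ}}$ is the rank-$k$ group $(\mathbf{Z}/2)^k$.

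First I would introduce the total evaluation $\Phi:L^n\times \mathbf{C}^k\to\mathbf{C}^n$, $\Phi(p,x)=\phi_p(x)$, and observe that $\Phi$ is a submersion because $L$ contains the constant polynomials: varying only the constant terms of the $p_i$ one hits every tangent direction in $\mathbf{C}^n$. Hence $I:=\Phi^{-1}(D_k^{\mathrm{sm}})$ is a complex submanifold of $L^n\times\mathbf{C}^k$ of codimension $k$, so $\dim I=n\,l$, where $l=\dim L$, matching the dimension of $L^n$. Consider the projection $\pi:I\to L^n$. A point $(p,x)\in I$ is a regular point of $\pi$ precisely when $d_x\phi_p(T_x\mathbf{C}^k)+T_{\phi_p(x)}D_k=T_{\phi_p(x)}\mathbf{C}^n$, i.e. when $\phi_p\pitchfork D_k$ at $x$; since $\dim\mathbf{C}^k=\mathrm{codim}\,D_k=k$ (this is where the assumption $n\ge 2k$ enters, guaranteeing $D_k\neq\emptyset$), transversality forces $d_x\phi_p$ to be an isomorphism onto a complement of $T_{\phi_p(x)}D_k$, in particular $\phi_p$ is a local immersion at $x$.

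Next I must show that $\pi$ has image of positive measure. As in the end of the preceding theorem's proof, the image of an affine morphism of varieties of equal dimension is either of full measure in the target or contained in a proper affine subvariety, in which case every component of a generic fiber has positive dimension by (\cite{Mum}, theorem 3.13). To rule out the second possibility I exhibit one $(p^\circ,x^\circ)\in I$ at which the fiber of $\pi$ is $0$-dimensional: fix a smooth point $y^\circ\in D_k$ and a linear complement $V\subset\mathbf{C}^n$ of $T_{y^\circ}D_k$, and choose $p_i^\circ(x)=y_i^\circ+\ell_i(x)$ where the $\ell_i\in (\mathbf{C}^k)^*$ are picked (using that $L$ contains all linear functionals) so that the linear map $\xi\mapsto(\ell_1(\xi),\ldots,\ell_n(\xi))$ from $\mathbf{C}^k$ has image $V$. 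Then $\phi_{p^\circ}(0)=y^\circ$ and $\phi_{p^\circ}\pitchfork D_k$ at $0$, so $(p^\circ,0)$ is an isolated preimage. Sard's lemma applied to $\pi$ then yields that for $p$ in a set of full measure in $L^n$ every point of $\pi^{-1}(p)$ is a regular point of $\pi$, so $\phi_p\pitchfork D_k$ at each such point, and the first step above guarantees such preimage points exist.

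For such a generic $p$, pick $x^\circ\in\phi_p^{-1}(D_k)$; by transversality, $\phi_p$ is a local holomorphic embedding near $x^\circ$. Working in the normal-form local coordinates $\tilde x_1,\ldots,\tilde x_n$ near $\phi_p(x^\circ)\in D_k$ from the proof of Theorem \ref{theorem:transversalintersection} (in which the root-to-coefficient map $p$ is $(\tilde z_i)\mapsto(\tilde z_1^2,\ldots,\tilde z_k^2,\tilde z_{k+1},\ldots,\tilde z_n)$ and $D_k=\{\tilde x_1=\cdots=\tilde x_k=0\}$), the pullbacks $\hat x_i=\phi_p^*\tilde x_i$ for $i=1,\ldots,k$ have independent differentials at $x^\circ$ and hence extend to local coordinates on $\mathbf{C}^k$. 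In these coordinates $\phi_p$ takes the form $(\hat x_1,\ldots,\hat x_k)\mapsto(\hat x_1,\ldots,\hat x_k,g_{k+1}(\hat x),\ldots,g_n(\hat x))$, so the pullback covering $\phi_p^*\mathbf{z}_{\mathrm{univ}}$ along the Parshin flag $\{\hat x_1=0\}\supset\cdots\supset\{\hat x_1=\cdots=\hat x_k=0\}$ has monodromy generated by the $k$ independent involutions $\hat x_i\leftrightsquigarrow 0$, giving the rank-$k$ abelian group $(\mathbf{Z}/2)^k$. Theorem \ref{theorem:local_obstruction} then yields the stated obstruction. The only non-routine step is the positivity-of-image argument for $\pi$, which is handled verbatim as in the preceding theorem's proof; everything else is a direct adaptation.
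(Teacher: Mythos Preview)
Your proof is correct and follows essentially the same route as the paper: set up the total evaluation map $\Phi:L^n\times\mathbf{C}^k\to\mathbf{C}^n$, use parametric transversality (Sard) to get $\phi_p\pitchfork D_k$ generically, use a dimension count on $I=\Phi^{-1}(D_k)$ to force the projection $\pi:I\to L^n$ to be dominant, and then invoke the local normal form near a transversal intersection point exactly as in Theorem~\ref{theorem:transversalintersection}. Your non-emptiness step is in fact cleaner than the paper's: you build an explicit $p^\circ$ (constants plus a linear map onto a complement of $T_{y^\circ}D_k$) so that $(p^\circ,0)$ is an isolated point of its fiber, whereas the paper's one-line appeal to constants alone (``send $\mathbf{C}^k$ to only one point in $D_k$'') gives a fiber equal to all of $\mathbf{C}^k$ and so does not literally produce a zero-dimensional fiber component---your construction supplies what that sentence was presumably gesturing at.
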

\begin{proof}
The proof is similar to the proof of the previous theorem. We will show that for generic choice of $(p_1,\ldots,p_n)\in L^n$ the image of $\mathbf C^k$ under the map $(p_1,\ldots,p_n)$ is transversal to $D_k$ and intersects $D_k$ non-trivially.

Transversality follows from transversality theorem \cite{GP}: the evaluation map from $L^n\times \mathbf C^k$ is transversal to $D_k$ because $L$ contains all linear functions. Hence for $p$ in a set of full measure in $L^n$ the map $p:\mathbf C^k \to \mathbf C^n$ is transversal to $D_k$.

To show that for generic $p$ the image of this map intersects $D_k$ non-trivially, we notice that the evaluation map described above is onto (since $L$ contains all constants) and hence the preimage of $D_k$ is at least $ln$-dimensional. Hence if it's projection onto $L^n$ is not of full measure, it is contained in a proper subvariety of $L^n$. This implies then that for generic $p\in L^n$ all components of the intersection of $p(\mathbf C^k)$ with $D_k$ are at least one-dimensional. This however contradicts the fact that all constants are in $L$: using them we can send $\mathbf{C}^k$ to only one point in $D_k$.

\end{proof}

\begin{remark}
The condition that $L$ contains constants and linear functions can be somewhat weakened. It is in fact enough to require that $L$ contains at least some polynomials $p_1,\ldots,p_{n-k}$ such that the set $p_1(x)=0,\ldots,p_{n-m}(x)=0$ intersects $D_k$ and at least some polynomial in $L$ doesn't vanish at one of the points of intersection.
\end{remark}
\begin{remark}
Versions of the previous two theorems where being rationally induced from an algebraic function on a variety of dimension $<k$ is replaced with being dominated by a tower of extensions of dimensions $k_1,\ldots,k_s$ with $k_1+\ldots+k_s<k$ are also correct for the same reasons.
\end{remark}


\bibliography{klein}
\end{document}